\documentclass[11pt]{article}
\usepackage{amsmath,amsthm}
\usepackage{url}
\usepackage{times}
\usepackage{graphicx}
\usepackage{color}
\usepackage{amssymb}

\usepackage{amsmath,empheq}
\usepackage{hyperref}

\newtheorem{theorem}{Theorem}
\newtheorem{remark}[theorem]{Remark}
\newtheorem{corollary}[theorem]{Corollary}
\newtheorem{lemma}[theorem]{Lemma}
\newtheorem{definition}[theorem]{Definition}
\newtheorem{proposition}[theorem]{Proposition}

\newcommand{\comment}[1]{}

\usepackage{amssymb}
\usepackage{amsfonts}
\usepackage{times}

\makeatletter
\renewcommand*\env@matrix[1][*\c@MaxMatrixCols c]{%
	\hskip -\arraycolsep
	\let\@ifnextchar\new@ifnextchar
	\array{#1}}
\makeatother

\definecolor{gold}{rgb}{0.83, 0.69, 0.22}

\definecolor{greenncs}{rgb}{0.0,0.62,0.42}

\usepackage{makeidx}
\makeindex

\begin{document}
	
	\title{Compact Embedding Theorem Associated with Classical Weight Functions in Two Variables}
\author{M. K. NANGHO, B. J. NKWAMOUO and J.L. WOUKENG \\
			\\
			Department of Mathematics and Computer Science,\\ University of Dschang, PO Box 67, Dschang, Cameroon } 
	\maketitle
	
	
		\begin{abstract}
		For a classical weight function $\rho$ defined on a simply connected open subset $\Omega$ of $\mathbb{R}^2$ (either bounded or unbounded) with piecewise $C^1$ boundary, we prove density of the space of bivariate polynomials and compact embedding of the matrix-weighted Sobolev space $W^{1,2}(\Omega,\rho,\rho\Phi)$ in the weighted Lebesgue space   $L^2(\Omega,\, \rho)$. As an application, we investigate via a variational method, eigenvalue problem for a degenerate Helmholtz operator on  triangle.
		\vspace{5pt}
		\newline
		\textbf{Keywords: }{Matrix-weighted Sobolev spaces, compact embedding, generalised Helmholtz operator.}
		
		\vspace{5pt}
		\noindent
		\textbf{2020 Mathematics Subject Classification:}  46E35, 35J70, 33C50.
	\end{abstract}
	
\maketitle
\pagestyle{myheadings}
\markboth{Maurice KENFACK NANGHO, Bleriod JIEJIP NKWAMOUO, and Jean Louis WOUKENG }{Compact embedding theorem associated with classical weight functions in two variables}

	\section{Introduction}
		Compact embeddings of Sobolev spaces are essential in
	the analysis of partial differential equations (cf. \cite{EDM}). For degenerate partial differential equations, this issue is fundamental and has attracted considerable attention for decades (cf. \cite{JH,BO,CRW, RM}). 
	Motivated by potential applications of the space of polynomials, $\mathcal{P}$, to the theory of weighted Sobolev spaces and second-order degenerate partial differential equations, we follow the definitions {\cite[Sections 1.62, 3.2(b)]{RA1}} of usual weak derivative and Sobolev spaces to define weighted Sobolev spaces involving weight functions, $\rho$, solutions of the first-order partial differential equation $div(\rho \Phi)=\rho\psi$ under Neumann-type condition. Moreover, we prove that each of  these spaces is compactly embedded into the weighted Lebesgue space $L^2$   associated with $\rho$, and provide an application. \\
	Let $\Omega$ be an open subset of $\mathbb{R}^2$ and $1\leq p< +\infty$. $L^p(\Omega)$ denotes the class of measurable functions $u$ defined on $\Omega$ such that $\int_{\Omega}|u|^pdx_1dx_2<+\infty,$ equipped with the norm 
	\begin{equation*}
		\|u\|_p^p=\int_{\Omega}|u|^pdx_1dx_2.
	\end{equation*}
	$W^{1,p}(\Omega)$ {denotes the set of all functions $u\in L^p(\Omega)$} so that $\partial_{x_1} u,\,\partial_{x_2} u\in L^p(\Omega)$, where $\partial_{x_1} u=\tfrac{\partial u}{\partial {x_1}}$ and $\partial_{x_1} u=\tfrac{\partial u}{\partial {x_2}}$ denote the weak (distributional) derivatives of $u$. That is, there exist $v_1,v_2\in L^p(\Omega)$ such that 
		\begin{equation*}
			\int_{\Omega}u\partial_{x_1} fdx_1dx_2=-\int_{\Omega} v_1fdx_1dx_2\;{\rm and}\;\int_{\Omega}u\partial_{x_2} fdx_1dx_2=-\int_{\Omega} v_2fdx_1dx_2,\,\, f\in C^{\infty}_c(\Omega),
		\end{equation*}
		which can also be written as 
		\begin{equation}\label{eqq1}
			\int_{\Omega}udiv(I_2f)dx_1dx_2=-\int_{\Omega} v^tI_2fdx_1dx_2,\,v=(v_1,v_2)^t,\,\, f\in C^{\infty}_c(\Omega),
	\end{equation}
	where $C^{\infty}_c(\Omega)$ denotes the set of infinitely differentiable functions on $\Omega$ with compact support in $\Omega$ and $I_2$ is the $(2,\,2)$-identity matrix. It is natural to refer to $v$, appearing in (\ref{eqq1}), as the weak gradient of $u$.
	Equipped with the norm 
	\begin{equation}
		\|u\|_{1,p}=\left(\int_{\Omega}|u|^pdx_1dx_2+\int_{\Omega} |\partial_{x_1} u|^p dx_1dx_2+\int_{\Omega} |\partial_{x_2} u|^p dx_1dx_2\right)^{1\over p}.
	\end{equation}
	$W^{1,p}\left(\Omega\right)$ is called a Sobolev space. It is well-known that the identity map from $W^{1,p}\left(\Omega\right)$ into $L^p\left(\Omega\right)$ is continuous. In other words, $W^{1,p}\left(\Omega\right)$ is embedded in the space $L^p\left(\Omega\right)$. When $\Omega$ is bounded with $C^1$ piecewise boundary, the embedding $W^{1,p}\left(\Omega\right)\rightarrow L^p(\Omega)$ is compact (cf. \cite[§5.7]{HB2011}). That is, every bounded sequence of $W^{1,p}\left(\Omega\right)$ has a subsequence converging to an element of $L^p\left(\Omega\right)$ with respect to the norm $\|.\|_{p}$ . This result is a fundamental tool for solving partial differential equations (PDEs) and variational problems involving limiting processes (cf. \cite{Evans, McOwen} and references therein). {For instance, consider the eigenvalue problem \cite{McOwen}
    \begin{equation}
		\left\{\begin{array}{ccc}
				-div(A\nabla u)+bu=\lambda\,u \quad \text{in} \; \Omega \label{seq1}\\
				\left(A\nabla u\right)\cdot\overrightarrow{n}=0 \quad \;\text{on} \; \partial \Omega,
			\end{array}\right.
		\end{equation}}
        where the matrix $A=\left(a_{ij}\right)_{i,j=1}^2$ is symmetric, the elliptic condition 

		\begin{equation}\label{ellc}
			\sum_{i,j=1}^{2}a_{ij}\zeta_i\zeta_j\geq \alpha |\zeta|^2,\alpha>0,
		\end{equation}
		 and the compactness of the embedding $W^{1,2}(\Omega)\rightarrow L^2(\Omega)$ {as well as} the boundedness of  function $b$ are key ingredients to prove the existence of a sequence of eigenvalues $0=\lambda_0<\lambda_1\leq \lambda_2\leq \lambda_3\leq \dots$ and associated eigenfunctions $\left\{u_n\right\}_{n\geq 0}$ that are weak solutions of the Neumann eigenvalue problem.
		
	When there {exists} $x_0\in\overline{\Omega}$ such that $\displaystyle{\lim_{x \to x_0}\sum_{i,j=1}^{2}a_{ij}(x)\zeta_i\zeta_j}=0$ for a non-zero $\zeta\in\mathbb{R}^n$, the elliptic condition (\ref{ellc}) is violated. In this case, the operator $Lu=-div(A\nabla u)+bu$ is said to be degenerate, and can no longer be analysed using classical Sobolev spaces, but rather requires weighted Sobolev spaces. This remains  true when $b$ is unbounded. These spaces are fundamental tools for analysing linear degenerate elliptic equations as well as PDEs in unbounded domains or with rough coefficients (cf. \cite{AK,BP, GU, MR} and {references therein}).  \cite{AK} is one of the first monographs dedicated to weighted Sobolev spaces. This book focuses on weighted inequalities, embeddings, and regularity theorems involving power or distance {weights}.  In the early 2000s by means of weighted Lebesgue space $L^2\left(\Omega,\, \rho\right)$,  the authors of \cite{BP} established a functional analytic setting suitable for handling elliptic equations with Neumann boundary conditions on  a closed  unbounded subset $K$ of $\mathbb{R}^d$ with $C^2$ boundary. In 2015 using matrix- weighted Sobolev spaces approach, Dariod D. Monticelli {et al.} \cite{MR} studied existence and spectral properties for weak solutions of Neumann and Dirichlet problems for linear degenerated elliptic operators with rough coefficients on bounded domain in $\mathbb{R}^2$. In these references as well as in \cite{SW, RM, DSKS} the authors considered matrix-weighted Sobolev space as completion of Lipschitz functions with respect to certain weighted Lebesgue norms. The approach to weighted Sobolev spaces developed in the present work is analogous to that of $W^{1,p}(\Omega)$. Before moving forward, let us recall and give some definitions:\\
	
	$L^2(\Omega,\, \rho)$ denotes the class of measurable functions $u:\Omega\rightarrow\mathbb{R}$ defined on $\Omega$ such that $$\int_{\Omega}|u|^2\rho dx_1dx_2<+\infty,$$ equipped with the norm 
	\begin{equation*}
		\|u\|_{2,\rho}^2=\langle u,u\rangle_{\rho},\;\langle u,v\rangle_{\rho}=\int_{\Omega}uv\rho dx_1dx_2.
	\end{equation*}
	Let $A$ be a symmetric positive-definite matrix on $\Omega$ and let $\mathcal{L}^2(\Omega,\,   \rho A)$ be the matrix-weighted Lebesgue space
	$$\mathcal{L}^2(\Omega,\,   \rho A)= \left\{ u= \begin{pmatrix}u_1\\ u_2\end{pmatrix}: \Omega \to \mathbb{R}^2 \; \text{measurable and}\, \displaystyle\int_\Omega u^t\rho A u \,dx_1dx_2< +\infty\right\}.$$ 
	The $\mathbb{R}$-valued functional defined on $\mathcal{L}^2(\Omega,\,   \rho A)$ by
	\begin{equation}\label{ie4}
		\|u\|_{2,\rho A}^2=\langle u,\,u\rangle_{\rho A},\;{\langle u,\,v \rangle_{\rho A}} =\int_{\Omega}u^t\rho A vdx_1dx_2,\;u,v\in \mathcal{L}^2(\Omega,\,  \rho A)
	\end{equation} 
	is a semi norm.
	Since $\|u-v\|_{2,\rho A}=0\Leftrightarrow\,u=v$ outside a set of measure zero,  $\|.\|_{2,\rho A}$ is a norm on $L^2(\Omega,\,  \rho A)$, the set of equivalence classes for the equivalence relation,  $u\mathcal{R}v\Leftrightarrow\,\|u-v\|_{2,\rho A}=0$, $u,v\in\mathcal{L}^2(\Omega,\,   \rho A)$. 
	We will denote by $\mathcal{P}$ the space of polynomials in two variables. $\mathcal{P}_n,\,n\geq 0,$ denotes subspace of $\mathcal{P}$ consisting of polynomials of degree at most $n$. $\mathcal{M}_2(\mathcal{P}_2)$ denotes the space of $(2,\,2)$-matrices with coefficients in $\mathcal{P}_2$.
	
	{ A weight function on $\Omega$ is a map $\rho:\Omega\rightarrow\mathbb{R}$  positive almost everywhere (a.e) and measurable such that  $$\int_{{\Omega}}\rho(x_1,x_2)dx_1dx_2<\infty.$$
		
			A  weight function $\rho$ on  $\Omega$ is a moment weight function if 
			\begin{equation}
				\quad \int_\Omega x_1^m x_2^n \rho(x_1,x_2)\,dx_1dx_2 < \infty,\; m,n\geq 0.
			\end{equation}
	}
	\begin{definition}\cite{NJN} \label{CWF}	Let  $\rho$ be a weight function on  $\Omega$.  $\Omega_j = \Omega \cap B(O,j)$, $j = 1,2, \dots$, and $\overrightarrow{n_j}$ an outward unit normal vector of $\partial \Omega_j$. $\rho$  is called classical if there is a symmetric matrix $\varPhi\in\mathcal{M}_{2}\left(\mathcal{P}_2\right)$, 	
		$\varPhi=\left(\begin{matrix}
			\phi_{1,1}&\phi_{1,2}\\
			\phi_{2,1}&\phi_{2,2}
		\end{matrix}\right)$, such that
		\begin{enumerate}
			\item {There exist two polynomials $\psi_i(x_1,x_2)=x^tD_i+E_i,\,i=1,2$, with $x=(x_1,x_2)^t,\,D_i\in\mathbb{R}\times\mathbb{R}$, $E_i\in \mathbb{R}$ and $det\left(D_1,\,D_2\right)\neq\,0$ such that $\rho$ satisfies the Pearson type equation
				\begin{equation}\label{te2a}
					div(\rho\varPhi)=\rho\left(\psi_{1},\,\psi_{2}\right).
			\end{equation} }
			\item Every element $u$ of $\mathcal{P}$ satisfies the Neumann boundary-type  condition   
			\begin{equation}\label{bc}
				\lim\limits_{j\rightarrow\infty}1_{\partial\Omega_j}\left(\rho \Phi\,\nabla\,u\right)\cdot\overrightarrow{n_j}=0.
			\end{equation}
			\item $\Phi$ satisfies the differential system 
		\begin{equation}\label{te1}
				\left\{\begin{array}{ccc}
\phi_{1,1}\partial_{x_1}\Phi+\phi_{2,1}\partial_{x_2}\Phi&=\Phi\nabla\left(\phi_{1,1},\phi_{2,1}\right),\\
    \phi_{1,2}\partial_{x_1}\Phi+\phi_{2,2}\partial_{x_2}\Phi&=\Phi\nabla\left(\phi_{1,2},\phi_{2,2}\right).
			\end{array}\right.
			\end{equation}
		\end{enumerate} 
	\end{definition}

	{In this work, we follow the definition of usual Sobolev spaces \cite[1.62, 3.2(b)]{RA1} to  introduce matrix-Sobolev spaces in two variables involving  classical weight functions $\rho$, defined on simply connected open subset $\Omega$ of $\mathbb{R}^2$, either bounded or unbounded, with piecewise $C^1$ boundary. We then prove that the closure of $\mathcal{P}$ in each of these matrix-Sobolev spaces is compactly embedded in $L^2(\Omega,\,\rho)$.
		More precisely for a classical weight function $\rho$ on $\Omega$, we establish the equality 
		\begin{equation*}
			\int_{\Omega}udiv(\rho\Phi v)dx_1dx_2=-\int_{\Omega}(\nabla u)^t\rho\Phi vdx_1dx_2,\;v=(v_1,\,v_2)^t\in \mathcal{P}\times \mathcal{P},
		\end{equation*}
		where  $u$ is a smooth function on the closure of $\Omega$ such that $u\in L^2(\Omega,\,\rho)$ and $\nabla u\in L^2(\Omega,\, \rho\Phi)$, and use it to generalise  (\ref{eqq1})  {as follows:}} for $u\in L^2(\Omega,\, \rho)$, if there is $h=(h_1,h_2)$ belonging to the matrix-Lebesgue space $L^2(\Omega,\, \rho\Phi)$ such that 
	\begin{equation}\label{dder}
		\int_{\Omega}udiv(\rho\Phi v)dx_1dx_2=-\int_{\Omega}h^t\rho\Phi vdx_1dx_2,\;v=(v_1,\,v_2)^t\in \mathcal{P}\times \mathcal{P},
	\end{equation}
	then $h$ is called the weak (distributional) gradient of $u$. Using this definition of weak gradient and following the definition of usual Sobolev spaces $W^{1,2}(\Omega)$ (cf. \cite[Definition 3.2]{RA1}), we introduce the matrix-weighted Sobolev space
	\begin{align*}
		W^{1,2}(\Omega,\,  \rho, \rho \Phi)= \left\{ u \in L^2(\Omega,\,  \rho) \; \text{and} \; \int_{\Omega}h^t\rho\Phi\nabla udx_1dx_2<\infty\right\},
	\end{align*}
	{equipped with the norm $\lVert u \rVert_{W^{1,2}(\Omega,\, \rho,\,\rho\Phi)} =\left( \int_\Omega u^2 \rho \,dx_1dx_2 + \int_\Omega (\nabla u)^t \rho \Phi (\nabla u) \,dx_1dx_2 \right)^{\frac{1}{2}}$ 
	where $\nabla$ is understood in the weak (distributional) sense (\ref{dder}) and $\Omega$ {is a} simply connected open subset of $\mathbb{R}^2$ with piecewise $C^1$ boundary. Moreover, denoting by 
	\begin{equation*}
	W(\Omega,\,\rho,\rho\Phi), {\rm the\, closure\, of}\, \mathcal{P} \,{\rm in \,the \,space} \,W^{1,2}(\Omega,\,  \rho, \rho \Phi)
		,\end{equation*}} we prove that:
	\begin{enumerate}
		\item the space of bivariate polynomials, $\mathcal{P}$ is dense in $L^2(\Omega,\, \rho)$,
		\item the embedding of $W^{1,2}\left(\Omega,\, \rho,\,\rho\Phi\right)$  in  $L^2(\Omega,\, \rho)$ is compact,
		\item 	$L^2(\Omega,\,\rho)$ has an orthogonal basis formed by   eigenfunctions of the generalised Helmholtz operator
		\begin{eqnarray*}
			Lu&=&-x_1(1-x_1)\partial_{x_1}^2u+2x_1x_2\partial_{x_1x_2}^2u-x_2(1-x_2)\partial_{x_2}^2u-(\alpha+1-(\alpha+\beta+\gamma+3)x_1)\partial_{x_1}u\\
			&&-(\beta+1-(\alpha+\beta+\gamma+3)x_2)\partial_{x_2}u+(2+x_1^2+x_2^2)u.
		\end{eqnarray*}  $\rho$ is the two-variable  weight function $$\rho(x_1,x_2)=x_1^{\alpha} {x_2}^{\beta}\left(1-x_1-x_2\right)^{\gamma},\,\alpha,\beta,\gamma>-1$$ and $\Omega$ is the triangle $\Omega=\left\{(x_1,x_2);x_1,\,x_2\geq 0\,{\rm and}\, x_1+x_2\leq 1\right\}$. Clearly, $L$ is degenerate at $(0,\,0),\,(0,\,1)$ and $(1,\,0)$. Those  eigenfunctions of $L$  are elements of $W(\Omega,\,\rho,\,\rho\Phi)$, their eigenvalues are positive and can be ordered in a sequence tending to $\infty$.
	\end{enumerate} 
	In the sequel, we will consider $\Omega$ to be a simply connected open subset of $\mathbb{R}^2$ with piecewise $C^1$ boundary and $\rho$ to be a weight function on $\Omega$ satisfying the matrix Pearson equation (\ref{te2a}) and the Neumann boundary-type condition (\ref{bc}). Under these conditions, we assume without loss of generality that $\rho$ is a moment weight function.
	
	\section{Matrix-weighted Sobolev space $W^{1,2}(\Omega,\, \rho,\rho \Phi)$}

	\begin{lemma} The space $L^2(\Omega,\,\rho \Phi)$ endowed with the norm (\ref{ie4}) is a Hilbert space.
	\end{lemma}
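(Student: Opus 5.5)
The plan is to reduce the statement to the classical completeness of an $L^2$ space by diagonalising the matrix weight pointwise. Throughout we use that $\Phi(x)$ is symmetric positive-definite for a.e. $x\in\Omega$; this is what makes (\ref{ie4}) a norm on the equivalence classes. Since $\Phi$ has polynomial entries, the following maps are measurable on $\Omega$:
\begin{itemize}
\item its eigenvalues $\lambda_1(x)\ge\lambda_2(x)>0$ (roots of a quadratic with measurable coefficients);
\item a measurable orthogonal matrix $Q(x)$ with $\Phi=Q^t\,\mathrm{diag}(\lambda_1,\lambda_2)\,Q$, obtained by a measurable selection of unit eigenvectors (e.g.\ normalising a nonzero column of $\Phi-\lambda_2 I_2$ where $\lambda_1\neq\lambda_2$, and taking $Q=I_2$ where $\lambda_1=\lambda_2$).
\end{itemize}
Setting $S(x)=\mathrm{diag}(\sqrt{\lambda_1},\sqrt{\lambda_2})\,Q(x)$, we get $\Phi=S^tS$, and
\[
\|u\|_{2,\rho\Phi}^2=\int_\Omega |S u|^2\rho\,dx_1dx_2=\int_\Omega \bigl((Su)_1^2+(Su)_2^2\bigr)\rho\,dx_1dx_2 .
\]

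Define $T:L^2(\Omega,\rho\Phi)\to L^2(\Omega,\rho)\times L^2(\Omega,\rho)$ by $Tu=Su$. By the identity above, $T$ is an isometry, where the product space carries the norm $\|(w_1,w_2)\|^2=\|w_1\|_{2,\rho}^2+\|w_2\|_{2,\rho}^2$. It is also well defined on equivalence classes, since $\|u-v\|_{2,\rho\Phi}=0$ exactly when $Su=Sv$ $\rho$-a.e.

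$T$ is surjective. Given $(w_1,w_2)$ in the product space, put $u=S^{-1}w=Q^t\,\mathrm{diag}(\lambda_1^{-1/2},\lambda_2^{-1/2})\,w$. This is measurable and finite a.e. because $\lambda_i>0$ a.e., and $Tu=w$.

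The space $L^2(\Omega,\rho)$ is complete: it is the Lebesgue space of the measure $\rho\,dx_1dx_2$ (Riesz--Fischer). Hence the product is a Hilbert space, and so is its isometric image, which is all of $L^2(\Omega,\rho\Phi)$. The parallelogram law, and hence the inner-product structure, is inherited directly from the bilinear form $\langle\cdot,\cdot\rangle_{\rho A}$ in (\ref{ie4}).

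As an alternative, one can avoid $S$ and argue directly. Take a Cauchy sequence $(u_n)$ and pick a subsequence with $\sum_k\|u_{n_{k+1}}-u_{n_k}\|_{2,\rho\Phi}<\infty$. By monotone convergence and Minkowski's inequality for the seminorm $|\xi|_{\Phi(x)}=(\xi^t\Phi(x)\xi)^{1/2}$, we get $\sum_k|u_{n_{k+1}}-u_{n_k}|_{\Phi(x)}<\infty$ for $\rho$-a.e. $x$. Positive-definiteness turns this into pointwise convergence in $\mathbb{R}^2$, and Fatou's lemma then gives convergence in norm.

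The main obstacle is the pointwise positive-definiteness of $\Phi$ a.e. together with the measurable choice of $Q$. I would handle the eigenvector selection as described in the list above, or sidestep it entirely with the direct Riesz--Fischer argument just given, which needs only that $\Phi(x)$ is positive-definite a.e.
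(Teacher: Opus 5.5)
Your proof is correct, but it takes a different route from the paper.

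The paper does not prove completeness directly. It notes that $\Phi$ is positive-definite with polynomial entries, and uses the moment property of $\rho$ to get $\int_\Omega|\rho\Phi|_{op}\,dx_1dx_2<\infty$. It then applies \cite[Theorem 4]{SW} with $\mathcal{Q}=\rho\Phi$. That is an external completeness theorem for matrix-weighted $L^2$ spaces, and its hypothesis, as invoked there, is integrability of the operator norm of the weight.

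You instead give a self-contained argument, in one of two ways:
\begin{itemize}
\item the measurable factorisation $\Phi=S^tS$, which makes $u\mapsto Su$ a linear isometry of $L^2(\Omega,\rho\Phi)$ onto $L^2(\Omega,\rho)\times L^2(\Omega,\rho)$;
\item the Riesz--Fischer subsequence argument with the pointwise norms $|\xi|_{\Phi(x)}$.
\end{itemize}
Both versions check out.

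What your route buys:
\begin{itemize}
\item It shows that completeness needs only measurability of $\Phi$ and positive-definiteness a.e. The integrability/moment hypothesis therefore plays no role in this lemma; it is needed elsewhere in the paper, e.g.\ so that polynomial vector fields lie in $L^2(\Omega,\rho\Phi)$.
\item It avoids the paper's passing claim that the eigenvalues of $\Phi$ are polynomials, which is false in general. For the triangle weight, the discriminant $(x_1-x_2)^2(1-x_1-x_2)^2+4x_1^2x_2^2$ is not a square of a polynomial. Only a polynomial bound such as $|\Phi|_{op}\le\mathrm{tr}\,\Phi$ is actually needed there.
\item It adapts to merely semidefinite weights, with $T$ mapping onto the closed subspace of fields taking values in $\mathrm{range}\,S(x)$ a.e.
\end{itemize}
The paper's route buys brevity by delegating to a general result.

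One simplification: you can skip the eigenvector selection by taking $S=\Phi^{1/2}$. For $2\times 2$ matrices, Cayley--Hamilton gives $\Phi^{1/2}=\bigl(\Phi+\sqrt{\det\Phi}\,I_2\bigr)/\sqrt{\mathrm{tr}\,\Phi+2\sqrt{\det\Phi}}$, which is continuous in $x$.
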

	
	\begin{proof}
		Since $\Phi$ is a positive-definite matrix-valued polynomial on $\Omega$, its eigenvalues are positive polynomials on $\Omega$. Since $\rho$ is a moment {weight} function, we obtain
		\begin{equation*}
			\int_{\Omega}\left|\rho\,\Phi\right|_{op}dx_1dx_2<+\infty\,,
		\end{equation*}
		where $\lvert . \rvert_{op}$ denotes the norm on $(2,\,2)$-matrices,
		$|\Phi|_{op}=sup\left\{\left |\lambda\right|; \lambda\, {\rm eigenvalue\; of}\, \Phi\right\}$. Thus,  it follows, from  \cite[Theorem 4]{SW} with $\mathcal{Q}=\rho \Phi$, that  $L^2(\Omega,\,  \rho \Phi)$ is a Hilbert space.  
	\end{proof}
	
	{\begin{lemma}\label{inlem} \hspace{1cm}\\
			\begin{enumerate}
				\item Let  $u$ and $v$ be elements of $L^2(\Omega,\,\rho\Phi)$. Then, $\|u^t\Phi v\|_{L^1(\Omega,\,\rho)}\leq \|u\|_{L^2(\Omega,\,\rho\Phi)}\|v\|_{L^2(\Omega,\,\rho\Phi)}.$ 
				\item Let $u$ be an element of $L^2(\Omega,\,\rho)$. Then, for all $v\in\mathcal{P}\times\mathcal{P}$, there exists a constant $C\geq 0$ such that  $$\int_{\Omega}|u div(\rho\Phi )v|dx_1dx_2\leq C \| u\|_{L^2(\Omega,\,\rho)}.$$
			\end{enumerate}
		\end{lemma}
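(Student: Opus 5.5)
For part 1, I will use the fact that $\Phi$ is symmetric and positive definite a.e. on $\Omega$. This makes $(\xi,\eta)\mapsto \xi^t\Phi(x)\eta$ an inner product on $\mathbb{R}^2$ for a.e. $x$. The pointwise Cauchy--Schwarz inequality then gives
$$|u^t\Phi v|\leq (u^t\Phi u)^{1/2}(v^t\Phi v)^{1/2}\quad\text{a.e. in }\Omega.$$
I will multiply by $\rho>0$, integrate over $\Omega$, and apply the Cauchy--Schwarz inequality in $L^2(\Omega,dx_1dx_2)$ to the functions $(\rho\, u^t\Phi u)^{1/2}$ and $(\rho\, v^t\Phi v)^{1/2}$. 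This yields $\int_\Omega |u^t\Phi v|\rho\,dx_1dx_2\leq \|u\|_{L^2(\Omega,\rho\Phi)}\|v\|_{L^2(\Omega,\rho\Phi)}$, which is the claim. Measurability of $u^t\Phi v$ is clear, since $\Phi$ has polynomial entries.

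For part 2, I read $div(\rho\Phi)v$ as $div(\rho\Phi v)$, or at least as the term that appears when expanding it. I will use the Pearson equation (\ref{te2a}) together with the product rule, row by row:
$$div(\rho\Phi v)=\rho\,(\psi_1,\psi_2)\,v+\rho\sum_{i,j}\phi_{i,j}\partial_{x_i}v_j .$$
This holds in the distributional sense, and pointwise wherever $\rho$ is regular enough. Set $q_v=(\psi_1,\psi_2)v+\sum_{i,j}\phi_{i,j}\partial_{x_i}v_j$. Then $div(\rho\Phi v)=\rho\, q_v$, and $q_v\in\mathcal{P}$ because $\psi_i$, $\phi_{i,j}$ and the components $v_j$ are all polynomials. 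If the intended quantity is literally $div(\rho\Phi)v=\rho(\psi_1,\psi_2)v$, the argument is the same with $q_v=(\psi_1,\psi_2)v$.

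The estimate will then follow from Cauchy--Schwarz in $L^2(\Omega,\rho)$:
$$\int_\Omega |u\,q_v|\rho\,dx_1dx_2\leq \|u\|_{L^2(\Omega,\rho)}\Big(\int_\Omega q_v^2\rho\,dx_1dx_2\Big)^{1/2}.$$
The last factor is finite because $\rho$ is a moment weight function and $q_v^2$ is a polynomial. So I can take $C=\|q_v\|_{L^2(\Omega,\rho)}$, which depends only on $v$.

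The main obstacle is justifying the identity $div(\rho\Phi v)=\rho q_v$ as an a.e. function identity, not merely a distributional one, since $\rho$ is only assumed measurable. I will first interpret (\ref{te2a}) in the sense the paper uses: $\rho\Phi$ is weakly differentiable, with row divergences equal to $\rho\psi_i$. Multiplying a weakly differentiable function by a polynomial preserves weak differentiability and satisfies the Leibniz rule locally. This gives the formula a.e. on each $\Omega_j$, and hence on $\Omega$.
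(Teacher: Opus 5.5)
Your proposal is correct and follows the paper's proof essentially step for step. For part 1 you use the pointwise Cauchy--Schwarz inequality for the form $\xi^t\Phi\eta$ followed by H\"older. For part 2 you use the Pearson equation plus the product rule to write $div(\rho\Phi v)=K\rho$ with $K$ a polynomial, then apply Cauchy--Schwarz in $L^2(\Omega,\rho)$ with $C=\|K\|_{L^2(\Omega,\rho)}$, which is finite by the moment condition.

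Your extra justification of $div(\rho\Phi v)=K\rho$ as an a.e.\ identity goes beyond the paper, which treats it as a direct computation. It only needs the distributional Leibniz rule with the smooth factors $v_j$, not weak differentiability of each entry $\rho\phi_{i,j}$.
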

		\begin{proof}
			Since the matrix $\Phi$ is symmetric and positive-definite, the function $H(u,v) =u^t\Phi v$, $u,v\in L^2(\Omega,\,\rho\Phi)$ is a scalar product. Therefore, $\vert u^t\Phi v\rvert \leq \sqrt{u^t\Phi u}\sqrt{v^t\Phi v}$. Integrating both sides with respect to $\rho dx_1dx_2$ and using the H\"older inequality, we obtain the first item.
			As for the second item, we first write by direct computation 
			$div(\rho\Phi v)=div(\rho\Phi)v+\left(\Phi_{11}\partial_{x_1}v_1+\Phi_{21}\partial_{x_2}v_1+\Phi_{12}\partial_{x_1}v_2+\Phi_{22}\partial_{x_1}v_2\right)\rho$ and take into account the Pearson equation (\ref{te2a}) to obtain
			\begin{equation}\label{div2}
				div(\rho\Phi v)=K\rho,
			\end{equation} where $K=\psi_{1}v_1+\psi_2v_2+\Phi_{11}\partial_{x_1}v_1+\Phi_{21}\partial_{x_2}v_1+\Phi_{12}\partial_{x_1}v_2+\Phi_{22}\partial_{x_2}v_2$ is a polynomial. Therefore,
			$$\int_{\Omega}|udiv(\rho\Phi v)| dx_1dx_2=\int_{\Omega}|uK|\rho dx_1dx_2.$$ We finally use the H\"older inequality to deduce $$\int_{\Omega}|udiv(\rho\Phi v)| dx_1dx_2\leq \left(\int_{\Omega}K^2\rho dx_1dx_2\right)^{\tfrac{1}{2}}\left(\int_{\Omega}u^2\rho dx_1dx_2\right)^{\tfrac{1}{2}}.$$ Since $\rho$ is a moment weight, $C=\left(\int_{\Omega}K^2\rho dx_1dx_2\right)^{\tfrac{1}{2}}$ is finite.
		\end{proof}
	}
	\begin{remark}\label{bcremark}\hspace{1 cm}\\
		\begin{enumerate}
			\item If $\Omega$ is bounded, (\ref{bc}) becomes $1_{\partial\Omega}\left(\rho\Phi\nabla v\right)\cdot\overrightarrow{n}=0$, $v\in\mathcal{P}$.
			\item (\ref{bc}) can also be rewritten as 	$\lim\limits_{j\rightarrow\infty}1_{\partial\Omega_j}\left(\rho \Phi\,v\right)\cdot\overrightarrow{n_j}=0,\, v\in\mathcal{P}\times\mathcal{P}$.
		\end{enumerate}
	\end{remark}

	\begin{proposition}\label{dderiv}
		Let $u$ be a smooth function on $\overline{\Omega}$ such that $u\in L^2(\Omega,\,  \rho)$ and $\nabla u\in L^2(\Omega,\,  \rho\Phi)$. Then
		\begin{equation}\label{dderiv0}
			\int_{\Omega}u div(\rho\Phi v)dx_1dx_2=-\int_{\Omega}(\nabla u)^t\rho\Phi vdx_1dx_2,\,v=(v_1,v_2)^t\in \mathcal{P}\times\mathcal{P}.
		\end{equation}
	\end{proposition}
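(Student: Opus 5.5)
The plan is to apply the divergence theorem (Green's formula) on the truncated domains $\Omega_j=\Omega\cap B(O,j)$ and then pass to the limit $j\to\infty$, using the integrability from Lemma~\ref{inlem} and the Neumann-type condition in the form of Remark~\ref{bcremark}(2).

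First, fix $v\in\mathcal{P}\times\mathcal{P}$. On $\Omega_j$, which is bounded with piecewise $C^1$ boundary, the product rule gives
\begin{equation*}
div(u\rho\Phi v)=u\,div(\rho\Phi v)+(\nabla u)^t\rho\Phi v .
\end{equation*}
The Gauss--Green theorem on $\Omega_j$ then yields
\begin{equation*}
\int_{\Omega_j}u\,div(\rho\Phi v)\,dx_1dx_2+\int_{\Omega_j}(\nabla u)^t\rho\Phi v\,dx_1dx_2=\int_{\partial\Omega_j}u\,(\rho\Phi v)\cdot\overrightarrow{n_j}\,ds .
\end{equation*}
To justify this, I rely on $u$ being smooth on $\overline{\Omega}$. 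Regularity of $\rho$ is more delicate, because $\rho$ may vanish or blow up on $\partial\Omega$ (e.g.\ the Jacobi weight on the triangle). I would therefore first apply the formula on slightly shrunken subdomains where $\rho$ is regular. By (\ref{div2}), $div(\rho\Phi v)=K\rho$ is a genuine function, so the vector field $\rho\Phi v$ has an $L^1_{loc}$ divergence, and the formula extends by approximation. The boundary traces are interpreted as in (\ref{bc}).

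Second, I pass to the limit $j\to\infty$ in the two volume integrals. By Lemma~\ref{inlem}(2) together with (\ref{div2}), $u\,div(\rho\Phi v)=uK\rho\in L^1(\Omega)$. Since $v\in L^2(\Omega,\rho\Phi)$ (as $\rho$ is a moment weight and $\Phi$ is polynomial) and $\nabla u\in L^2(\Omega,\rho\Phi)$ by hypothesis, Lemma~\ref{inlem}(1) gives $(\nabla u)^t\rho\Phi v\in L^1(\Omega)$. Since $\Omega_j\uparrow\Omega$, dominated convergence sends both volume integrals to their integrals over $\Omega$. If $\Omega$ is bounded, then $\Omega_j=\Omega$ for large $j$, and no limit is needed.

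Third, I show that the boundary term vanishes. Remark~\ref{bcremark}(2) gives $1_{\partial\Omega_j}(\rho\Phi v)\cdot\overrightarrow{n_j}\to0$, and Remark~\ref{bcremark}(1) gives that this flux is identically zero on $\partial\Omega$ when $\Omega$ is bounded. The boundary integral $\int_{\partial\Omega_j}u(\rho\Phi v)\cdot\overrightarrow{n_j}\,ds$ splits into two pieces. On $\partial\Omega\cap \overline{B(O,j)}$ the integrand is zero. The remaining piece lies on the arc $\Omega\cap\partial B(O,j)$. For that arc, note that the left-hand side of the Green identity converges, so the boundary integrals converge to some limit $\ell$. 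I would then argue that $\ell=0$, using the pointwise vanishing condition (\ref{bc}) on the arcs together with the integrability of $u\rho\Phi v$; if necessary I would pass along a subsequence of radii $j$, chosen via Fubini/averaging over $r\in[j,j+1]$ so that the arc integrals tend to zero.

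I expect this third step to be the main obstacle: turning the pointwise boundary condition (\ref{bc}) into vanishing of the weighted boundary integral for a general smooth $u$ that is not a polynomial, and simultaneously justifying Green's formula near boundary points where $\rho$ is singular.
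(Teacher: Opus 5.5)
Your skeleton matches the paper's. Both use the product rule $div(u\rho\Phi v)=u\,div(\rho\Phi v)+(\nabla u)^t\rho\Phi v$, the divergence theorem on $\Omega_j=\Omega\cap B(O,j)$, dominated convergence for the volume integrals via Lemma~\ref{inlem}, and the boundary condition to remove the flux. You differ genuinely only in the boundary term at infinity, and there your route is the sound one.

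\textbf{The arc term.} The paper rewrites the line integral over $\partial\Omega_j$ as $\int_{\Omega}1_{\partial\Omega_j}(u\rho\Phi v)\cdot\overrightarrow{n_j}\,dx_1dx_2$ and applies dominated convergence with the majorant $|u|\,\|\rho\Phi v\|\in L^1(\Omega)$. But $\partial\Omega_j$ has zero area, so that area integral vanishes identically and does not represent the arc integral. Dominated convergence also needs a fixed measure, whereas here the arc-length measure moves with $j$.

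Your averaging argument is exactly the repair. Set $h(r)=\int_{\Omega\cap\partial B(O,r)}|u\rho\Phi v|\,ds$. Polar coordinates and H\"older give $\int_0^\infty h(r)\,dr=\int_\Omega|u\rho\Phi v|\,dx_1dx_2\le\|u\|_{L^2(\Omega,\rho)}\bigl(\int_\Omega\|\Phi v\|^2\rho\,dx_1dx_2\bigr)^{1/2}<\infty$ by the moment property. Hence there are radii $r_k\to\infty$ with $h(r_k)\to0$. The volume side of the Green identity on $\Omega\cap B(O,r)$ converges as $r\to\infty$, so the full limit $\ell$ equals the subsequential limit $0$.

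You should commit to this option outright rather than hedging. Condition \eqref{bc} restricted to the arcs is vacuous, because a fixed point lies on at most one circle $\partial B(O,j)$; so it cannot be what forces $\ell=0$. What \eqref{bc} actually yields is $(\rho\Phi v)\cdot\overrightarrow{n}=0$ on $\partial\Omega$ itself: for fixed $x\in\partial\Omega$ the sequence is eventually constant.

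Your worry about non-polynomial $u$ is unfounded there. Since $u$ is continuous on $\overline{\Omega}$, it factors out of the flux pointwise.

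\textbf{Singularity of $\rho$ at $\partial\Omega$.} The paper does not address this; it applies the divergence theorem directly. Your shrinking-domain argument is the right way to handle it. It requires the flux of $\rho\Phi v$ through the shrunken boundaries to tend to zero, that is, \eqref{bc} read as a limit from inside. This does hold for the triangle weight: the flux through $\{x_1=\epsilon\}$ is $O(\epsilon^{\alpha+1})$, and through $\{x_1+x_2=1-\epsilon\}$ it is $O(\epsilon^{\gamma+1})$.
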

	\begin{proof}
		Observing that: \begin{equation}\label{deq}
			div(u\rho\Phi\,v)=udiv(\rho\Phi\,v)+\left(\nabla u\right)^t\rho\Phi\,v
		\end{equation} and integrating both sides on $\Omega$ we obtain 
		\begin{equation}\label{ideq}
			\int_{\Omega}udiv(\rho\Phi\,v)dx_1dx_2=\int_{\Omega}div(u\rho\Phi\,v)dx_1dx_2-\int_{\Omega}\left(\nabla u\right)^t\Phi\,v\rho dx_1dx_2.
		\end{equation}
		\begin{itemize}
		\item 	If $\Omega$ is bounded, from the divergence theorem, $\int_{\Omega}div(u\rho\Phi\,v)dx_1dx_2= {\int_{\partial\Omega}\left(\rho\Phi\nabla v\right)\cdot\overrightarrow{n}\,dx_1dx_2 }$. Using the boundary condition for bounded domain (cf. the first item of the Remark\, \ref{bcremark})  and the fact that  $u$ is defined on $\overline{\Omega}$, we have $1_{\partial\Omega}u\left(\rho\Phi v\right)\cdot\overrightarrow{n}=0$. Therefore, $\int_{\Omega}div(u\rho\Phi\,v)dx_1dx_2=0$.
		
		\item {	If $\Omega$ is unbounded, then \\$\displaystyle{\lim_{j \to \infty }1_{\Omega_j}div(u\rho\Phi\,v)}=1_{\Omega}div(u\rho\Phi\,v)$ and $\lvert1_{\Omega_j}div(u\rho\Phi\,v)\rvert\leq \lvert 1_{\Omega}div(u\rho\Phi\,v)\rvert$ because $\Omega_j$ is an {increasing} sequence of subsets of $\Omega$ such that $\bigcup\Omega_j=\Omega$.\\
		Combining the relation (\ref{deq}) and the Lemma \ref{inlem}, we derive the existence of a constant $C\geq 0$ such that
			$$\int_{\Omega}\lvert div(u\rho\Phi\,v)\rvert dx_1dx_2 \leq C \| u\|_{L^2(\Omega,\,\rho)}+\| \nabla u\|_{L^2(\Omega,\,\rho\Phi)}\| v\|_{L^2(\Omega,\,\rho\Phi)}.$$ Therefore, $\int_{\Omega}\lvert div(u\rho\Phi\,v)\rvert dx_1dx_2<+\infty$ for $u\in L^2(\Omega,\,\rho)$ and $\nabla u\in L^2(\Omega,\,\rho\Phi)$. So, by the dominated convergence theorem on the sequence $\left\{1_{\Omega_j}div(u\rho\Phi\,v)\right\}_{j\geq 1}$ it follows that:} 
		
		$$\int_{\Omega}div(u\rho\Phi\,v)dx_1dx_2=\lim_{j \to \infty }\int_{\Omega_j}div(u\rho\Phi\,v)dx_1dx_2.$$ Since  $\Omega_j,\,j=1,2,.. .$ is bounded, we deduce from the divergence theorem $$\int_{\Omega_j}div(u\rho\Phi\,v)dx_1dx_2=\int_{\partial\Omega_j}\left(u\rho\Phi\,v\right)\cdot\overrightarrow{n_j}dx_1dx_2.$$ Therefore, 
		
		$$\int_{\Omega}div(u\rho\Phi\,v)dx_1dx_2=\lim_{j \to \infty }\int_{{\Omega}} 1_{\partial\Omega_j}\left(u\rho\Phi\,v\right)\cdot\overrightarrow{n_j}dx_1dx_2.$$
		
		 From the boundary condition for unbounded domain (cf. the second item of the Remark\,\ref{bcremark}) $$\displaystyle{\lim_{j \to \infty}1_{\partial\Omega_j}\left(u\rho\Phi v\right)\cdot\overrightarrow{n_j}}=0$$ whenever $v\in \mathcal{P}\times \mathcal{P}$. So, to obtain (\ref{dderiv0}) it suffices to prove that  
		\begin{equation}\label{e10}
			\lim_{j \to \infty }\int_{{\Omega}} 1_{\partial\Omega_j}\left(u\rho\Phi\,v\right)\cdot\overrightarrow{n_j}dx_1dx_2=\int_{{\Omega}}  \lim_{j \to \infty } 1_{\partial\Omega_j}\left(u\rho\Phi\,v\right)\cdot\overrightarrow{n_j}dx_1dx_2.
		\end{equation} 
		Let  $\{f_j\}_{j\geq 1}$ {be the sequence of functions defined by} 
		$f_j=1_{\partial\Omega_j}\left(u\rho\Phi\,v\right)\cdot\overrightarrow{n_j},j=1,2,...\, .$ According to the Cauchy-Schwarz inequality, we deduce that:  $$|f_j|\leq 1_{\partial\Omega_j} |u|\|\overrightarrow{n_j}\|\|\Phi\,v\|\rho\leq 1_{\overline{\Omega}}|u|\|\Phi\,v\|\rho$$ for $\|\overrightarrow{n_j}\|=1$. Therefore, the use  of the {H\"older} inequality yields   
		$$\int_{\Omega}|u|\|\Phi\,v\|\rho dx_1dx_2\leq \left(\int_{\Omega}u^2\rho dx_1dx_2\right)^{1\over 2}\left(\int_{\Omega}\|\Phi\,v\|^2\rho dx_1dx_2\right)^{1\over 2}.$$ Since $u\in L^2(\Omega,\, \rho)$ and $\|\Phi\,v\|^2$ is a polynomial, the right-hand side is finite. So $\left\{f_j\right\}_{j\geq 1}$ is a sequence of integrable functions on $\Omega$. Moreover, there exists a positive integrable function, $h=|u|\|\rho\Phi\,v\|$, on $\Omega$ such that $|f_j|\leq h$. Therefore, from the dominated convergence theorem, (\ref{e10}) is satisfied. Thus $\int_{\Omega}div(u\rho\Phi\,v)dx_1dx_2=0.$ Using (\ref{ideq}), we obtain $$\int_{\Omega}udiv\left(\rho\Phi\right)dx_1dx_2=-\int_{\Omega}\left(\nabla u\right)^t\rho\Phi vdx_1dx_2\; v\in \mathcal{P}\times \mathcal{P}.$$
	\end{itemize}
	\end{proof}
	This Proposition allows us to define weak gradient as well as matrix-weighted Sobolev space $W^{1,2}(\Omega,\, \rho,\rho\Phi)$ {as follows:}
	
	\begin{definition}\label{dweakd} Let  $u$ be an element of $L^2(\Omega,\, \rho)$. The weak gradient of $u$ when it exists is a function $h\in L^{2}(\Omega,\,\rho\Phi)$ such that 
		\begin{equation}\label{weakd}
\int_{\Omega}u div(\rho\Phi v)dx_1dx_2=-\int_{\Omega}h^t\rho\Phi vdx_1dx_2,\,v=(v_1,v_2)^t\in \mathcal{P}\times\mathcal{P}.
		\end{equation}
	\end{definition}
	
	\begin{definition}\label{WSS}\hspace{1cm}\\
		\begin{enumerate}
			\item $W^{1,2}(\Omega,\,\rho,\,\rho\Phi)$ is the matrix-weighted Sobolev space
			$$W^{1,2}(\Omega,\,\rho,\,\rho\Phi)=\left\{u \in L^2(\Omega,\,  \rho);\,\nabla u \in L^{2}\left(\Omega,\,\rho\Phi\right)\right\}$$
			endowed with the norm \begin{equation}\label{norm} \lVert u \rVert_{W^{1,2}(\Omega,\, \rho,\,\rho\Phi)} =\left( \int_\Omega u^2 \rho \,dx_1dx_2 + \int_\Omega (\nabla u)^t \rho \Phi (\nabla u) \,dx_1dx_2 \right)^{\frac{1}{2}},\end{equation}
			where $\nabla u$ is understood in the weak (distributional) gradient of Definition \ref{dweakd}.
			\item $W(\Omega,\,\rho,\,\rho\Phi)$ is the closure of the space of polynomials in two variables, $\mathcal{P}$, in\\ $W^{1,2}(\Omega,\,\rho,\,\rho\Phi)$ with respect to the norm (\ref{norm}).
		\end{enumerate}
	\end{definition}
	
	The norm (\ref{norm}) is associated with the inner product
	$$\langle u, v \rangle_{W^{1,2}(\Omega,\, \rho,\,\rho\Phi)} = \int_\Omega u v \rho \,dx_1dx_2 + \int_\Omega (\nabla u)^t \rho \Phi (\nabla v) \,dx_1dx_2,\;\; u,v\in W^{1,2}(\Omega,\, \rho,\,\rho\Phi).$$
	
	In the sequel, for an element $u\in W(\Omega,\, \rho,\rho\Phi)$, we will denote its norm of $W^{1,2}(\Omega,\, \rho,\rho\Phi)$ by\\ $\lVert . \rVert_{W(\Omega,\, \rho,\,\rho\Phi)}$.
	
	\begin{proposition} $W^{1,2}(\Omega,\,\rho,\,\rho\Phi)$ is a Hilbert space.
	\end{proposition}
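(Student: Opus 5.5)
The plan is the standard completeness argument for Sobolev spaces, transported to the weighted setting via the map $u\mapsto (u,\nabla u)$. The norm (\ref{norm}) comes from the inner product displayed just before the statement, so only completeness needs proof. Positive-definiteness of the norm holds because $\|u\|_{W^{1,2}}=0$ forces $\|u\|_{2,\rho}=0$, so $u=0$ a.e. (recall $\rho>0$ a.e.).

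\textbf{Step 1: a Cauchy sequence gives two limits.} Let $\{u_n\}$ be Cauchy in $W^{1,2}(\Omega,\rho,\rho\Phi)$. Then $\{u_n\}$ is Cauchy in $L^2(\Omega,\rho)$, and $\{\nabla u_n\}$ is Cauchy in $L^2(\Omega,\rho\Phi)$. Since $L^2(\Omega,\rho)$ is complete (a classical weighted Lebesgue space) and $L^2(\Omega,\rho\Phi)$ is a Hilbert space by the first Lemma of this section, there exist $u\in L^2(\Omega,\rho)$ and $h\in L^2(\Omega,\rho\Phi)$ with $u_n\to u$ and $\nabla u_n\to h$ in these norms.

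\textbf{Step 2: $h$ is the weak gradient of $u$.} This is the key step. Fix $v\in\mathcal{P}\times\mathcal{P}$. Each $u_n$ satisfies (\ref{weakd}) with gradient $\nabla u_n$. On the left-hand side, Lemma \ref{inlem}(2) applied to $u_n-u$ gives
$$\Big|\int_\Omega (u_n-u)\,div(\rho\Phi v)\,dx_1dx_2\Big|\le C\|u_n-u\|_{L^2(\Omega,\rho)}\to0.$$
For the right-hand side, we first note that $v\in L^2(\Omega,\rho\Phi)$. This holds because $v^t\Phi v$ is a polynomial and $\rho$ is a moment weight. Then Lemma \ref{inlem}(1) gives
$$\Big|\int_\Omega(\nabla u_n-h)^t\rho\Phi v\,dx_1dx_2\Big|\le\|\nabla u_n-h\|_{L^2(\Omega,\rho\Phi)}\|v\|_{L^2(\Omega,\rho\Phi)}\to0.$$
Passing to the limit in (\ref{weakd}) for $u_n$ therefore yields (\ref{weakd}) for $u$ with gradient $h$. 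Hence $u\in W^{1,2}(\Omega,\rho,\rho\Phi)$ with $\nabla u=h$.

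\textbf{Step 3: convergence in the Sobolev norm.} By construction,
$$\|u_n-u\|^2_{W^{1,2}}=\|u_n-u\|^2_{2,\rho}+\|\nabla u_n-h\|^2_{2,\rho\Phi}\to0,$$
which proves completeness.

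\textbf{Main obstacle.} The delicate point is well-definedness: the weak gradient must be unique as an element of $L^2(\Omega,\rho\Phi)$, so that $\nabla u$ and hence the norm make sense. If $h_1,h_2$ both satisfy (\ref{weakd}), then $\int_\Omega (h_1-h_2)^t\rho\Phi v\,dx_1dx_2=0$ for all $v\in\mathcal{P}\times\mathcal{P}$. Concluding $h_1=h_2$ requires density of $\mathcal{P}\times\mathcal{P}$ in $L^2(\Omega,\rho\Phi)$. I would try to deduce this from the density of $\mathcal{P}$ in $L^2(\Omega,\rho)$ announced in the introduction, applied componentwise with the polynomial weight entries of $\Phi$ absorbed into moment weights. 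Failing that, I would define $\nabla u$ as the (unique) element of the closure of $\mathcal{P}\times\mathcal{P}$ in $L^2(\Omega,\rho\Phi)$ satisfying (\ref{weakd}), and run Steps 1–3 in that closed subspace, which is itself complete.
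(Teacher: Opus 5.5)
Your argument is correct and is essentially the paper's proof: take the limits $u$ and $h$ of $\{u_n\}$ and $\{\nabla u_n\}$ in $L^2(\Omega,\rho)$ and $L^2(\Omega,\rho\Phi)$, then pass to the limit in the defining identity (\ref{weakd}). The paper does this by writing $div(\rho\Phi v)=K\rho$ and using that strong convergence implies weak convergence, which amounts to the same estimates as your use of Lemma~\ref{inlem}. The paper never addresses the uniqueness of the weak gradient that you flag; your fallback (using the orthogonal projection of any weak gradient onto the closure of $\mathcal{P}\times\mathcal{P}$ in $L^2(\Omega,\rho\Phi)$) is sound and makes both the norm and your Step~3 rigorous, whereas the density route needs more care when $\Omega$ is unbounded.
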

	\begin{proof}
		Let $\{u_n\}_{n \geq 1}$ be a Cauchy sequence in $ W^{1,2}(\Omega,\,  \rho, \rho \Phi)$.  $\{u_n\}_{n \geq 1}$  and $\left\{\nabla u_n  \right\}_{n \geq 1}$ are Cauchy sequences of $L^2(\Omega,\,  \rho)$ and $L^2(\Omega,\,  \rho \Phi)$ respectively. Since these two spaces are Hilbert spaces,  $\{u_n\}_{n \geq 1}$ converges in $L^2(\Omega,\,  \rho)$ to some limit that we denote by $u$, and $\left\{ \nabla u_n  \right\}_{n \geq 1}$ converges in $L^2(\Omega,\,  \rho \Phi)$ to some limit that we denote by $w= (w_1, w_2)^t$. Since $u_n\in W^{1,2}(\Omega,\,\rho\,\rho\Phi)$, for all $v=(v_1,\,v_2)$, $v_i\in \mathcal{P},i=1,2,$
		\begin{equation*}
			\int_{\Omega}u_ndiv(\rho\Phi\,v)dx_1dx_2=-\int_{\Omega}\left(\nabla u_n\right)^t\rho\Phi v dx_1dx_2.
		\end{equation*}
		From the equation (\ref{div2}) $div(\rho\Phi\,v)=K\rho$, where $K$ is a polynomial. Therefore, 
		\begin{equation}\label{heq}
			\int_{\Omega}u_nK\rho dx_1dx_2=-\int_{\Omega}\left(\nabla u_n\right)^t\rho\Phi v dx_1dx_2.
		\end{equation}
		Since the sequences $\{u_n\}_{n\geq 1}$ and $\left\{ \nabla u_n  \right\}_{n \geq 1}$ converge to $u$ and $w$  in $L^2(\Omega,\,\rho)$ and $L^2(\Omega,\,  \rho \Phi)$ respectively, they weakly converge to $u$ and $w$ in $L^2(\Omega,\,\rho)$ and $L^2(\Omega,\,  \rho \Phi)$ respectively. So,
		\begin{eqnarray*}
			\lim_{n \to \infty}\int_{{\Omega}}u_nK\rho dx&=&\int_{{\Omega}}uK\rho dx,\;{\rm for}\;K\in\mathcal{P}\subset L^2(\Omega,\rho),\\
				\lim_{n \to \infty}\int_{{\Omega}}(\nabla u_n)^t\Phi v\rho dx&=&\int_{{\Omega}}w^t\Phi\,v\rho dx,\;{\rm for}\;v\in\mathcal{P}\times \mathcal{P}\subset L^2(\Omega,\Phi\rho).
		\end{eqnarray*}
		 Therefore,
		letting $n\to \infty$ in  (\ref{heq}), it follows that: 	$\int_{\Omega}u K\rho dx_1dx_2=-\int_{\Omega}w^t\rho\Phi v dx_1dx_2.$ Hence	$$\int_{\Omega}udiv(\rho\Phi\,v)  dx_1dx_2=-\int_{\Omega}w^t\rho\Phi v dx_1dx_2.$$ By Definition \ref{dweakd}, $w$ is a weak gradient of $u$. Thus, $\{u_n\}_{n \geq 1}$ converges to $u$ in $W^{1,2}\left(\Omega,\,\rho,\,\Phi\rho\right)$.
	\end{proof}

	\begin{lemma}\label{Lemma1}Let $C^1_c(\Omega)$ be the space of smooth functions on $\Omega$ with compact support. Then $C^1_c(\Omega)$ is a subset of $W(\Omega,\,\rho,\,\rho\Phi)$.	
	\end{lemma}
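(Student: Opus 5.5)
Given $f\in C^1_c(\Omega)$, the plan is to produce a sequence of polynomials $p_n$ with $p_n\to f$ in $L^2(\Omega,\rho)$ and $\nabla p_n\to\nabla f$ in $L^2(\Omega,\rho\Phi)$. Since $W^{1,2}(\Omega,\rho,\rho\Phi)$ is complete by the preceding Proposition, this puts $f$ in the closure $W(\Omega,\rho,\rho\Phi)$ of $\mathcal{P}$.

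\textbf{Step 1: $f$ belongs to $W^{1,2}(\Omega,\rho,\rho\Phi)$ with weak gradient equal to its classical gradient.} Since $f$ and $\nabla f$ are continuous with compact support $K\subset\Omega$, they are bounded. Because $\rho$ is integrable and $\Phi$ is polynomial, hence bounded on $K$, we get $f\in L^2(\Omega,\rho)$ and $\nabla f\in L^2(\Omega,\rho\Phi)$. The identity (\ref{weakd}) with $h=\nabla f$ follows exactly as in Proposition \ref{dderiv}. The only difference is that the boundary term $\int_\Omega \mathrm{div}(f\rho\Phi v)$ vanishes trivially, because $f\rho\Phi v$ has compact support in $\Omega$. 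No use of (\ref{bc}) is needed here; one only needs $\rho\Phi v$ regular enough, which is already implicit in the Pearson equation (\ref{te2a}).

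\textbf{Step 2: approximate by polynomials on $K$.} Pick a bounded open set $U$ with $K\subset U\Subset\Omega$, or a large ball containing $K$. By the Weierstrass theorem in the $C^1$ version (approximate $f$ and its first derivatives simultaneously on a compact set), there are polynomials $p_n$ with $\|p_n-f\|_{C^1(B)}\to0$ on any fixed closed ball $B\supset K$. On $B$ the errors $\int_B|p_n-f|^2\rho$ and $\int_B(\nabla p_n-\nabla f)^t\rho\Phi(\nabla p_n-\nabla f)$ then tend to $0$, since $\int_B\rho|\Phi|_{op}<\infty$.

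\textbf{Step 3: control of the tails.} This is the main obstacle. If $\Omega$ is bounded, take $B\supset\overline\Omega$ and the argument is finished. If $\Omega$ is unbounded, $p_n$ does not vanish off $K$, and uniform approximation on $B$ says nothing about $\int_{\Omega\setminus B}p_n^2\rho$.

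My first attempt is to reduce to density of $\mathcal{P}$ in $L^2(\Omega,\rho)$, which is item 1 of the announced results and follows from the moment property together with a determinacy or exponential-decay condition. Concretely:
\begin{itemize}
\item Approximate each component $\partial_{x_i}f$ by polynomials $q$ in the weighted norm.
\item Integrate the approximants: use the $C^1$ approximation on a ball, and handle the tails with a Bernstein-type construction, for instance $p_n=f\ast$(a polynomial kernel) multiplied by a factor $e^{-\epsilon|x|^2}$ truncated into a Taylor polynomial.
\end{itemize}
If this becomes unwieldy, the fallback is to argue by duality. Suppose $g\in W^{1,2}$ is orthogonal to all polynomials in the $W^{1,2}$ inner product. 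Using (\ref{weakd}) and (\ref{div2}), rewrite this orthogonality as $\int_\Omega g(p+\text{polynomial})\rho=0$ for all $p$. Then use the density of $\mathcal{P}$ in $L^2(\Omega,\rho)$ to conclude that $g=0$, so that $\mathcal{P}$ is dense, and in particular $f\in W(\Omega,\rho,\rho\Phi)$.
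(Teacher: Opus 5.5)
Your Steps 1--2 follow the paper's route. The paper takes Whitney approximants $p_n$ converging in $C^1$ on the support $K$, then writes $\int_\Omega(\nabla(u-p_n))^t\rho\Phi\nabla(u-p_n)\,dx_1dx_2=\int_K(\cdots)$ and bounds the $L^2(\Omega,\rho)$ term by $\sup_K|u-p_n|^2\int_K\rho$. That is exactly the step you reject in Step 3, and you are right to: $p_n$ does not vanish on $\Omega\setminus K$. Your bounded-case repair is correct and is what the paper's proof should say: extend $f$ by zero, approximate in $C^1$ on a closed ball containing $\overline\Omega$, and use $\int_\Omega\rho|\Phi|_{op}<\infty$.

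For unbounded $\Omega$, however, your proposal has a genuine gap, and the paper does not fill it either. Your first attempt is not an argument. Polynomials $q_1,q_2$ that approximate $\partial_{x_1}f,\partial_{x_2}f$ separately form a gradient only if $\partial_{x_2}q_1=\partial_{x_1}q_2$. ``Integrating'' one of them loses control of the other derivative, and the norm that matters is the matrix-weighted one of $L^2(\Omega,\rho\Phi)$. The Gaussian-times-truncated-Taylor construction runs into the same tail problem: truncated Taylor polynomials of $e^{-\epsilon|x|^2}$ grow at infinity, so bounding $\int_{\Omega\setminus B}p_n^2\rho$ needs quantitative decay of $\rho$ that you have not assumed.

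The duality fallback is the right idea, but it skips its key step. Orthogonality to $\mathcal{P}$ in $W^{1,2}$, combined with (\ref{weakd}) for $v=\nabla p$ and with (\ref{div2}), gives $\int_\Omega g\,(p-\mathcal{D}p)\rho\,dx_1dx_2=0$ for all $p\in\mathcal{P}$, where $\mathcal{D}p=\psi_1\partial_{x_1}p+\psi_2\partial_{x_2}p+\sum_{i,j}\phi_{i,j}\partial^2_{x_ix_j}p$. To conclude $g\perp\mathcal{P}$ in $L^2(\Omega,\rho)$, you must show that $I-\mathcal{D}$ maps $\mathcal{P}$ onto $\mathcal{P}$. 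This does hold:
\begin{itemize}
\item by degree count, $\mathcal{D}(\mathcal{P}_n)\subset\mathcal{P}_n$;
\item Proposition \ref{dderiv} applied to $u=q\in\mathcal{P}$ gives $\langle q,\mathcal{D}p\rangle_\rho=-\int_\Omega(\nabla q)^t\rho\Phi\nabla p\,dx_1dx_2$;
\item hence $\mathcal{D}$ is symmetric and negative semidefinite on the finite-dimensional inner-product space $(\mathcal{P}_n,\langle\cdot,\cdot\rangle_\rho)$, so $I-\mathcal{D}$ is invertible there.
\end{itemize}

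The remaining input, density of $\mathcal{P}$ in $L^2(\Omega,\rho)$, is the real obstruction. The paper deduces it from this very lemma, so appealing to ``item 1'' is circular. Finite moments alone do not imply it. You need an explicit additional hypothesis, for example $\int_\Omega e^{c|x|}\rho\,dx_1dx_2<\infty$ for some $c>0$, which holds for Hermite- and Laguerre-type weights. Under such a hypothesis the completed duality argument gives $g=0$, using that the weak gradient is determined by $g$ (as the paper's norm presupposes). It then proves more than the lemma, namely $W(\Omega,\rho,\rho\Phi)=W^{1,2}(\Omega,\rho,\rho\Phi)$. Without it, the unbounded case remains unproved, in your proposal as in the paper.
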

	\begin{proof}
		Let $u$ be an element of $C_c^1(\Omega)$. There exists a compact $K\subset\Omega$ such that the support of $u$ is included in $K$. From Whitney \cite{WH}, there is a sequence {$\{p_n\}_{n \geq 1}$}  of polynomials in two variables such that {$\{p_n\}_{n \geq 1}$}, {$\{\frac{\partial p_n}{\partial{x_1}}\}_{n \geq 1}$}  and {$\{\frac{\partial p_n}{\partial{x_2}}\}_{n \geq 1}$}  converge uniformly on $K$ to $u$, $\frac{\partial u}{\partial{x_1}}$ and $\frac{\partial u}{\partial{x_2}}$ respectively. The following inequality can be obtained by direct computation 
		\begin{align*}
			&\lefteqn{\int_{\Omega}(\nabla (u-p_n))^t\rho\Phi\nabla (u-p_n)dx_1dx_2}\\
			&=\int_K(\nabla (u-p_n))^t\rho\Phi\nabla (u-p_n)dx_1dx_2\\
			&\leq M  \left(\sup_{(x_1,x_2)\in K}|\partial_{x_1}u(x_1,x_2)-\partial_{x_1}p_n(x_1,x_2)|+\sup_{(x_1,x_2)\in K}|\partial_{x_2}u(x_1,x_2)-\partial_{x_2}p_n(x_1,x_2)|\right)^2,
		\end{align*}
		where $M=\displaystyle{\max_{1\leq i,j\leq 2}\int_K|\phi_{i,j}|\rho dx_1dx_2}$.  Therefore, $$\displaystyle{\lim_{n\to \infty}\int_{\Omega}(\nabla (u-p_n))^t\rho\Phi\nabla (u-p_n)dx_1dx_2=0},$$ for $\{\frac{\partial p_n}{\partial{x_1}}\}_{n \geq 1}$  and $\{\frac{\partial p_n}{\partial{x_2}}\}_{n \geq 1}$  converge uniformly on $K$ to $\frac{\partial u}{\partial{x_1}}$ and $\frac{\partial u}{\partial{x_2}}$ respectively. Moreover,    $$\lim_{n \to \infty}\int_{\Omega}|u(x_1,x_2)-p_n(x_1,x_2)|^2\leq \lim_{n \to \infty}\sup_{(x_1,x_2)\in K}|u(x_1,x_2)-p_n(x_1,x_2)|^2\int_K\rho dx_1dx_2=0.$$ 
		Thus $\displaystyle{\lim_{n\to \infty} \|f-p_n\|_{W(\Omega,\, \rho,\,\rho\Phi)}}=0$. That is, $u\in W(\Omega,\, \rho,\rho\Phi)$. 
	\end{proof}
	\begin{theorem} \hspace{ 1 cm}\\
		\begin{enumerate}
			\item $W(\Omega,\,\rho,\,\rho\Phi)$ is dense and embedded into $L^2\left(\Omega,\,\rho\right)$.
			\item  The space of polynomials in two variables, $\mathcal{P}$, is dense in $L^2(\Omega,\,\rho)$.
		\end{enumerate}
		
	\end{theorem}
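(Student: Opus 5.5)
The embedding part of the first item is immediate from the definition of the norm (\ref{norm}). For every $u\in W(\Omega,\rho,\rho\Phi)$ we have
\[
\|u\|_{2,\rho}\le \|u\|_{W(\Omega,\,\rho,\,\rho\Phi)},
\]
so the identity map $W(\Omega,\rho,\rho\Phi)\to L^2(\Omega,\rho)$ is well defined, linear and continuous with norm at most $1$. It is injective because elements of $W(\Omega,\rho,\rho\Phi)$ are already (classes of) elements of $L^2(\Omega,\rho)$.

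For the density statements, the plan is to go through $C^1_c(\Omega)$ and Lemma \ref{Lemma1}.

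First, $C_c(\Omega)$ is dense in $L^2(\Omega,\rho)$. The measure $\mu=\rho\,dx_1dx_2$ is a finite Borel measure on the open set $\Omega\subset\mathbb{R}^2$, since $\rho$ is integrable. It is therefore a Radon measure, and by the standard density theorem (e.g. Rudin, Real and Complex Analysis, Thm. 3.14) continuous compactly supported functions are dense in $L^2(\Omega,\mu)=L^2(\Omega,\rho)$.

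Next, $C^1_c(\Omega)$ (indeed $C^\infty_c(\Omega)$) is dense in $C_c(\Omega)$ for the $L^2(\Omega,\rho)$ norm. Take $g\in C_c(\Omega)$ with support $K$, and mollify: $g_\varepsilon=g*\eta_\varepsilon$. For $\varepsilon<\tfrac12\,\mathrm{dist}(K,\partial\Omega)$, all $g_\varepsilon$ are supported in a fixed compact $K'\subset\Omega$, and $g_\varepsilon\to g$ uniformly there. Hence
\[
\int_\Omega|g_\varepsilon-g|^2\rho\le \sup_{K'}|g_\varepsilon-g|^2\int_{K'}\rho\;\to\;0 .
\]

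Finally, Lemma \ref{Lemma1} gives $C^1_c(\Omega)\subset W(\Omega,\rho,\rho\Phi)$. So the chain
\[
C^1_c(\Omega)\subset W(\Omega,\rho,\rho\Phi)\subset L^2(\Omega,\rho),
\]
with the first space dense in the last, shows that $W(\Omega,\rho,\rho\Phi)$ is dense in $L^2(\Omega,\rho)$. This proves item 1.

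For item 2, I would avoid going through $W$ and instead mimic the proof of Lemma \ref{Lemma1} directly at the $L^2(\Omega,\rho)$ level. Given $u\in L^2(\Omega,\rho)$ and $\varepsilon>0$, pick $g\in C_c(\Omega)$ with $\|u-g\|_{2,\rho}<\varepsilon$, with support in a compact $K$. By the Weierstrass (or Stone–Weierstrass) theorem there is $p\in\mathcal{P}$ with $\sup_K|g-p|$ small. The difficulty is that $p$ does not vanish off $K$, so $\int_{\Omega\setminus K}p^2\rho$ is not controlled by uniform closeness on $K$; the proof of Lemma \ref{Lemma1} quietly integrates only over $K$. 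This is the main obstacle, and it matters most for unbounded $\Omega$.

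In the bounded case the fix is easy. Approximate $g$, viewed as a continuous function on the compact set $\overline{\Omega}$ (extended by $0$), uniformly on all of $\overline\Omega$. Then
\[
\|g-p\|_{2,\rho}^2\le \sup_{\overline\Omega}|g-p|^2\int_\Omega\rho,
\]
which is small because $\rho$ is integrable.

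In the unbounded case I would prove density by a Hahn–Banach/orthogonality argument. Suppose $f\in L^2(\Omega,\rho)$ is orthogonal to all of $\mathcal{P}$, and show $f=0$. If $\rho$ has some exponential decay, $\int e^{c|x|}\rho<\infty$, this is standard: the measure $f\rho\,dx$ has a Fourier–Laplace transform that is analytic near the real axis, all its derivatives at $0$ (the moments $\int x^\alpha f\rho$) vanish, so the transform vanishes and $f=0$. The classical weights on unbounded domains (Laguerre/Hermite type products) do satisfy such a decay condition, which is determinacy of the moment problem. I would therefore try to extract exponential decay of $\rho$ from the Pearson equation (\ref{te2a}) together with $\det(D_1,D_2)\ne0$, since this forces $\rho$ to behave like $e^{-\text{quadratic}}$ or $e^{-\text{linear}}$ at infinity. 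Proving that decay from the definition is the step I expect to be hardest. If it cannot be derived in full generality, I would add it as a standing hypothesis satisfied by all classical examples.
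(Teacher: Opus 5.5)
Your embedding argument and your proof of item 1 are the paper's: Lemma~\ref{Lemma1}, density of $C^1_c(\Omega)$ in $L^2(\Omega,\rho)$, and $\|u\|_{2,\rho}\le\|u\|_{W(\Omega,\,\rho,\,\rho\Phi)}$. You justify the density of $C^1_c(\Omega)$ more carefully than the paper does. The gap is in item 2. For unbounded $\Omega$ your argument rests on $\int_\Omega e^{c|x|}\rho\,dx_1dx_2<\infty$. You do not derive this from (\ref{te2a})--(\ref{bc}), and it is not a hypothesis of the theorem. Your Fourier--Laplace argument is sound under it, but adding it as a standing hypothesis proves a weaker theorem. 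You also missed the one-line deduction the paper uses. Every $w\in W(\Omega,\rho,\rho\Phi)$ is a $W$-norm limit of polynomials $p_k$, and $\|w-p_k\|_{2,\rho}\le\|w-p_k\|_{W(\Omega,\,\rho,\,\rho\Phi)}$. So $W(\Omega,\rho,\rho\Phi)$ lies in the $L^2(\Omega,\rho)$-closure of $\mathcal{P}$, and density of $W$ gives density of $\mathcal{P}$ at once. Since also $\mathcal{P}\subset W(\Omega,\rho,\rho\Phi)$, items 1 and 2 are in fact equivalent.

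That equivalence makes your plan inconsistent. The obstacle you isolate is a correct criticism of the proof of Lemma~\ref{Lemma1}. That proof replaces $\int_\Omega$ by $\int_K$ in both the gradient term and the $L^2$ term, although $p_n$ does not vanish off $K$. But then you cannot invoke Lemma~\ref{Lemma1} for item 1 while treating item 2 as open. If the lemma holds, item 2 is the one-liner above; if its proof is not trusted, item 1 is unproved in exactly the unbounded case where item 2 is.

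A coherent route reverses yours. First prove item 2 directly; your Stone--Weierstrass argument on $\overline{\Omega}$ handles the bounded case. (Approximating $u$ together with $\nabla u$ uniformly on $\overline{\Omega}$ likewise repairs Lemma~\ref{Lemma1} there.) Then item 1 follows from $\mathcal{P}\subset W(\Omega,\rho,\rho\Phi)$, and Lemma~\ref{Lemma1} is not needed at all. For unbounded $\Omega$ a real additional argument is still owed, e.g.\ an actual derivation of the exponential integrability of $\rho$ from the classical-weight hypotheses and $\det(D_1,D_2)\neq0$. Until it is supplied, neither item is established there, whether by your proposal or, for the reason you found, by the paper's proof.
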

	\begin{proof}
		From the Lemma \ref{Lemma1}, $C_c^1(\Omega)\subset W(\Omega,\, \rho,\rho\Phi)\subset L^2(\Omega,\, \rho)$. Since $C_c^1(\Omega)$ is dense in $L^2(\Omega,\,  \rho)$, the density of $W(\Omega,\, \rho,\rho\Phi)$ in $L^2(\Omega,\, \rho)$ follows. The embedding is obtained using the inequality $\|u\|_{L^2(\Omega,\,\rho)}\leq \|u\|_{W(\Omega,\, \,\rho,\,\rho\Phi)}$ for all $u\in W(\Omega,\,\rho,\,\rho\Phi)$. The second statement follows from the first and the definition of $W(\Omega,\,  \rho,\rho\Phi)$.
	\end{proof} 
	Therefore, by the definition \cite[p.143]{HB2011}, it follows that
	\begin{corollary} Any family of polynomials that is orthonormal with respect to $\rho$ forms a Hilbert basis of $L^2(\Omega,\,\rho)$.
	\end{corollary}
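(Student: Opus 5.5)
The corollary follows from the density of $\mathcal{P}$ in $L^2(\Omega,\,\rho)$ (second item of the preceding Theorem), combined with the standard characterisation of a Hilbert basis in \cite[p.143]{HB2011}: an orthonormal family is a Hilbert basis exactly when the linear space it spans is dense. First we fix what "family of polynomials orthonormal with respect to $\rho$" must mean. It is a family $\{P_{n,k}\}$ with $\langle P_{n,k},P_{m,l}\rangle_\rho=\delta_{nm}\delta_{kl}$. Since $\rho$ is a moment weight, every polynomial lies in $L^2(\Omega,\rho)$, and all these inner products are finite.

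The key structural point is that the family must span all of $\mathcal{P}$. To guarantee this, we read the statement as concerning an orthonormal polynomial system in the usual sense: for each $n$, the polynomials of total degree $n$ in the family together with those of lower degree span $\mathcal{P}_n$. Such a family can be produced by applying Gram--Schmidt to the monomials $x_1^ix_2^j$ ordered by total degree. This is possible because $\langle\cdot,\cdot\rangle_\rho$ is positive definite on $\mathcal{P}$: a nonzero polynomial vanishes only on a null set and $\rho>0$ a.e. Granting this, $\mathrm{span}\{P_{n,k}\}=\bigcup_n\mathcal{P}_n=\mathcal{P}$.

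The conclusion is then immediate. Since $\mathcal{P}$ is dense in $L^2(\Omega,\rho)$, the span of the family is dense. Equivalently, if $f\in L^2(\Omega,\rho)$ satisfies $\langle f,P_{n,k}\rangle_\rho=0$ for all $n,k$, then $f$ is orthogonal to every polynomial, hence to the closure of $\mathcal{P}$, which is the whole space, so $f=0$. By \cite[p.143]{HB2011} the family is therefore a Hilbert basis, and Parseval's identity and the expansion $f=\sum\langle f,P_{n,k}\rangle_\rho P_{n,k}$ follow.

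The only real obstacle is the spanning issue. An arbitrary orthonormal family of polynomials need not span $\mathcal{P}$; for example, a single polynomial is an orthonormal family but is not a basis. So the statement must be read with "family" meaning a complete orthonormal polynomial system, one containing a basis of each $\mathcal{P}_n$. With that reading, which the proof should make explicit, everything else is a direct consequence of the preceding Theorem.
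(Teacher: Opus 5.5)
Your proof is correct and follows the paper's route exactly: the corollary is deduced from the density of $\mathcal{P}$ in $L^2(\Omega,\,\rho)$ (second item of the preceding Theorem), combined with Brezis's definition of a Hilbert basis as an orthonormal sequence whose linear span is dense. The paper's one-line deduction uses silently that the family spans $\mathcal{P}$ (e.g.\ contains a basis of each $\mathcal{P}_n$), and the statement as literally written fails for, say, a single normalized polynomial, so you are right to state that requirement explicitly.
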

	
	\section{Compact embedding theorem}

\begin{theorem}\label{th1}
	The space	$W^{1,2}(\Omega,\,  \rho, \rho \Phi)$ is compactly embedded in $L^2(\Omega,\,  \rho)$.
\end{theorem}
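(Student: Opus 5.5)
The plan is to show that the embedding $W^{1,2}(\Omega,\,\rho,\,\rho\Phi)\hookrightarrow L^2(\Omega,\,\rho)$ is a norm limit of finite-rank operators. The finite-rank operators will be spectral truncations built from polynomial eigenfunctions of the second-order operator
$$\mathcal{D}u=-\frac{1}{\rho}\,div(\rho\Phi\nabla u).$$
By (\ref{div2}) applied with $v=\nabla p$, for $p\in\mathcal{P}_n$ we get $\mathcal{D}p=-(\psi_1\partial_{x_1}p+\psi_2\partial_{x_2}p+\sum_{i,j}\phi_{i,j}\partial^2_{x_ix_j}p)$. Since $\deg\phi_{i,j}\le 2$ and $\deg\psi_i\le 1$, this is again a polynomial in $\mathcal{P}_n$, so $\mathcal{D}$ maps $\mathcal{P}_n$ into itself. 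Proposition \ref{dderiv} with $u=q\in\mathcal{P}$ and $v=\nabla p$ gives
$$\langle \mathcal{D}p,q\rangle_\rho=\int_\Omega(\nabla q)^t\rho\Phi\nabla p\,dx_1dx_2 .$$
Hence $\mathcal{D}$ is symmetric and nonnegative on $\mathcal{P}$. Let $V_n$ be the $\rho$-orthogonal complement of $\mathcal{P}_{n-1}$ in $\mathcal{P}_n$. Then $V_n$ is $\mathcal{D}$-invariant, and $\mathcal{D}$ can be diagonalised orthogonally on each $V_n$. This yields a $\rho$-orthonormal family of polynomials $\{P_k\}$ with $\mathcal{D}P_k=\lambda_kP_k$ and $\lambda_k\ge 0$. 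Positive-definiteness of $\Phi$ forces $\lambda_k=0$ only for constants. By the Corollary on density of $\mathcal{P}$, $\{P_k\}$ is a Hilbert basis of $L^2(\Omega,\,\rho)$.

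Next I relate the $W^{1,2}$ seminorm to the coefficients of $u$ in this basis. Let $u\in W^{1,2}(\Omega,\,\rho,\,\rho\Phi)$ and write $c_k=\langle u,P_k\rangle_\rho$. Taking $v=\nabla P_k\in\mathcal{P}\times\mathcal{P}$ in Definition \ref{dweakd} gives
$$\langle\nabla u,\nabla P_k\rangle_{\rho\Phi}=-\int_\Omega u\,div(\rho\Phi\nabla P_k)\,dx_1dx_2=\lambda_k c_k .$$
The symmetry identity above shows that the vectors $\nabla P_k/\sqrt{\lambda_k}$, for $\lambda_k>0$, are orthonormal in the Hilbert space $L^2(\Omega,\,\rho\Phi)$. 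Bessel's inequality then gives
$$\sum_{k}\lambda_k c_k^2\le\|\nabla u\|^2_{L^2(\Omega,\,\rho\Phi)}\le\|u\|^2_{W^{1,2}(\Omega,\,\rho,\,\rho\Phi)}.$$

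Define $T_N u=\sum_{\deg P_k\le N}c_kP_k$. Each $T_N$ is a finite-rank bounded operator from $W^{1,2}$ to $L^2(\Omega,\,\rho)$. Set $\mu_N=\min\{\lambda_k:\deg P_k>N\}$. Then
$$\|u-T_Nu\|^2_{L^2(\Omega,\,\rho)}=\sum_{\deg P_k>N}c_k^2\le \mu_N^{-1}\|u\|^2_{W^{1,2}}.$$
So as soon as $\mu_N\to\infty$, the embedding is an operator-norm limit of finite-rank operators and therefore compact.

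The main obstacle is to prove $\mu_N\to\infty$, that is, that the smallest eigenvalue of $\mathcal{D}$ on $V_n$ tends to infinity. I would first compute the action of $\mathcal{D}$ modulo $\mathcal{P}_{n-1}$. Let $\Phi^{(2)}$ be the quadratic part of $\Phi$. Since $\psi_i=x^tD_i+E_i$, the top homogeneous part of $\mathcal{D}p$ for $p$ homogeneous of degree $n$ is
$$-\Big(\sum_{i,j}\phi^{(2)}_{i,j}\partial^2_{x_ix_j}p+\sum_i (x^tD_i)\,\partial_{x_i}p\Big).$$
This defines a linear map $A_n$ on homogeneous polynomials of degree $n$. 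The eigenvalues of $\mathcal{D}|_{V_n}$ coincide with those of $A_n$, because $V_n\cong\mathcal{P}_n/\mathcal{P}_{n-1}$ and $\mathcal{D}$ induces exactly $A_n$ on this quotient.

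The plan is to use the compatibility system (\ref{te1}) together with the Pearson equation to show that $A_n$ is triangular in a suitable basis. Its diagonal entries should be of the form $-(n(n-1)a+n\,d_j)$, with $a$ and $d_j$ determined by $\Phi^{(2)}$ and $(D_1,D_2)$; this is the Krall–Sheffer type computation, and $det(D_1,D_2)\neq0$ excludes the degenerate cases.

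Since these eigenvalues are known to be strictly positive for $n\ge1$, the resulting expression is a nonconstant polynomial in $n$. Positivity for all $n\ge1$ then forces it to grow at least linearly. If this explicit route proves too messy, the fallback is to argue case by case through the classification of classical weights (triangle, disk, product Laguerre/Hermite types), where the eigenvalues are explicit and grow like $n$ or $n^2$.
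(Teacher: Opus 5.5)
You have not proved the step $\mu_N\to\infty$, and that step carries the entire theorem. The reduction before it is correct and differs from the paper's mollifier/Arzel\`a--Ascoli argument. The invariance of $V_n$, the identity $\langle\nabla u,\nabla P_k\rangle_{\rho\Phi}=\lambda_kc_k$, Bessel's inequality and the bound $\|u-T_Nu\|^2_{L^2(\Omega,\rho)}\le\mu_N^{-1}\|u\|^2_{W^{1,2}}$ all check out. But the reduction is an equivalence: if $\mu_N\not\to\infty$, infinitely many $P_k$ are $\rho$-orthonormal with $\|P_k\|^2_{W^{1,2}}=1+\lambda_k$ bounded, so the embedding is not compact. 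So the open step is not a technicality.

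Your sketch for that step does not work as stated.

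\textbf{The diagonal form is wrong in general.} The form $-(n(n-1)a+nd_j)$ with finitely many $d_j$ holds only in the Krall--Sheffer admissible case, $\phi^{(2)}_{ij}=a\,x_ix_j$ and $(x^tD_1,x^tD_2)=d\,x^t$, where $\mathcal{D}$ is a scalar on $V_n$. Definition \ref{CWF} does not impose this. For $\rho=e^{-x_1^2-2x_2^2}$ on $\mathbb{R}^2$ with $\Phi=I_2$, the eigenvalues on $V_n$ are $4n-2j$, $0\le j\le n$. For $x_1^\alpha(1-x_1)^\beta x_2^\gamma e^{-x_2}$ on $(0,1)\times(0,\infty)$ with $\Phi=\mathrm{diag}(x_1(1-x_1),x_2)$, which satisfies (\ref{te2a}), (\ref{bc}) and (\ref{te1}), they are $j^2+(\alpha+\beta)j+n$, $0\le j\le n$.

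\textbf{The growth argument does not control the minimum.} In general the spectrum on $V_n$ is a family $\lambda_{n,j}$ with $n+1$ members. The observation that a nonconstant polynomial in $n$, positive for $n\ge1$, must grow says nothing about $\min_{0\le j\le n}\lambda_{n,j}$. What you need is a lower bound uniform in $j$, for instance $\min\sigma(\mathcal{D}|_{V_n})\ge c\,n$, derived from the hypotheses.

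\textbf{The triangularity and the fallback are not established.} You only assert that (\ref{te1}) together with the Pearson equation makes $A_n$ triangular; no derivation is given. Even if it held, it would not give the uniform bound. The case-by-case fallback is also unavailable. Krall--Sheffer treats only the admissible case, the paper's class is broader (it contains the product weight above), and no classification of it is given. Checking the standard examples therefore does not prove the statement.

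You cannot borrow the missing ingredient from the paper either. Its Step 1 lets $\varepsilon\to0$ after extracting an $\varepsilon$-dependent subsequence, which would need $\|\eta_\varepsilon*\tilde v_n-\tilde v_n\|_{L^2}\to0$ uniformly in $n$, and it never uses $\nabla u_n$. Your equivalence shows that spectral input of this kind cannot be bypassed.
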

\begin{proof} We will split the proof into two steps: first, we assume that $\Omega$ is bounded and second, we assume that it is unbounded.
	\begin{enumerate}
		\item[Step 1.] $\Omega$ is a bounded subset of $\mathbb{R}^2$.\\
		Let $\{u_n\}_{n \geq 1}$ be a bounded sequence in $W^{1,2}(\Omega,\,  \rho, \rho \Phi)$. We prove the existence of a subsequence of $\{u_n\}_{n \geq 1}$ that converges in  $L^2(\Omega,\,  \rho)$.\\		
		Since $ W^{1,2}(\Omega,\,  \rho, \rho \Phi)$ is continuously embedded in $L^2(\Omega,\,  \rho)$, the sequence $\{u_n\}_{n \geq 1}$ is also bounded in $L^2(\Omega,\,  \rho)$. Then, there exists a constant  $C > 0$ such that for all $n \geq 1$, $\lVert u_n \rVert_{L^2(\Omega,\,  \rho)} < C$, that is ,
		\begin{equation}\label{3eq1}
			\lVert \rho^\frac{1}{2} u_n \rVert_{L^2(\Omega)} < C.
		\end{equation}
		Given  {$n \geq 1$}, we set 
		$ \tilde{v}_n = \rho^\frac{1}{2} \tilde{u}_n, \qquad \text{where} \qquad 
		\tilde{u}_n= \begin{cases}
			u_n(x) & \text{if $x \in \Omega$}\\
			0&  \text{if $ x\in \mathbb{R}^2 \setminus \Omega$}.
		\end{cases}.$\\
		$\tilde{u}_n \in L^2(\mathbb{R}^2)$ for all $n\geq 1$. Let $\eta$ be the function defined by :
		$$\eta(x) = \begin{cases}
			ke^{\frac{1}{ \lVert x\rVert^2 -1}} & \text{if $ \lVert x \rVert < 1 $}\\
			0 & \text{if $ \lVert x \rVert \geq 1$} 
		\end{cases},$$ where $k= \left(\int_{\mathbb{R}^2} \eta(x)\,dx \right)^{-1}$ and  {$\eta$} is an infinitely  differentiable function with compact support. Let {$(\eta_\varepsilon)_{\varepsilon>0}$ be the sequence of mollifiers functions associated with  {$\eta$}.  $\eta_\varepsilon =  \varepsilon^{-2} \eta\left( \frac{x}{\varepsilon} \right)$, $\varepsilon>0$}.   The sequence of smooth functions on $\mathbb{R}^2$, {$\{\tilde{v}_{n}^{\varepsilon}\}_{\varepsilon>0},$} where $\tilde{v}_{n}^{\varepsilon}(x) = (\eta_\varepsilon* \tilde{v}_n)(x),$ converges to $\tilde{v}_n$  in $L^2(\mathbb{R}^2)$ (cf.\cite[Theorem 2.29]{RA1}). Moreover, $\tilde{v}_{n}^{\varepsilon}$ is differentiable on $\Omega$ and its differential at $x\in\Omega$ is 
		
		$$D\tilde{v}_{n}^{\varepsilon}(x)(h)=h_1\frac{\partial \tilde{v}_{n}^{\varepsilon}}{\partial x_1} (x)+h_2\frac{\partial \tilde{v}_{n}^{\varepsilon}}{\partial x_2} (x),\;h\in\mathbb{R}^2.$$  \\
		Differentiating the function $\tilde{v}_{n}^{\varepsilon}$  with respect to the variable $x_1$ (respectively $x_2$), we obtain
		\begin{eqnarray*}
			\frac{\partial \tilde{v}_{n}^{\varepsilon}}{\partial x_1} (x)= \varepsilon^{-2} \int_\Omega \frac{\partial}{ \partial x_1}\left[ \eta\left( \frac{x-y}{\varepsilon} \right)\right] \tilde{v}_n(y) \,dy & = \varepsilon^{-3} \int_\Omega \left({\partial \eta_{\varepsilon}\over\partial x_1}\right)\left( \frac{x-y}{\varepsilon} \right) \tilde{v}_n(y) \,dy,\label{3eq2} \\
			\frac{\partial \tilde{v}_{n}^{\varepsilon}}{\partial x_2} (x)= \varepsilon^{-2} \int_\Omega \frac{\partial}{ \partial x_2} \left[\eta\left( \frac{x-y}{\varepsilon} \right)\right] \tilde{v}_n(y) \,dy & = \varepsilon^{-3} \int_\Omega \left({\partial \eta_{\varepsilon}\over\partial x_2}\right)\left( \frac{x-y}{\varepsilon} \right) \tilde{v}_n(y) \,dy.\label{3eq3}
		\end{eqnarray*} 
		Hence, using the {H\"older} inequality as well as  the relation (\ref{3eq1}), we derive the following
		\begin{equation}\label{3eq4} 
			\left| \frac{\partial\, \tilde{v}_{n}^{\varepsilon}}{\partial x_1} (x) \right| \leq \,C \varepsilon^{-3}\, |\Omega|^{\frac{1}{2}}\, \sup_{z \in \Omega} \left| \left({\partial \eta_{\varepsilon}\over\partial x_1}\right)(z) \right|\; \text{and}\;
			\left| \frac{\partial\, \tilde{v}_{n}^{\varepsilon}}{\partial x_2} (x) \right| \leq  \,C \varepsilon^{-3}\, |\Omega|^{\frac{1}{2}}\,\sup_{z \in \Omega} \left| \left({\partial \eta_{\varepsilon}\over\partial x_2}\right)(z) \right|.
		\end{equation}
		Therefore, \[\sup_{h\in\mathbb{R}^2,h\neq 0}\tfrac{\|D\tilde{v}_{n}^{\varepsilon}(x)(h)\|}{\|h\|}\leq K_1+K_2\] with $K_i=\,C \varepsilon^{-3}\, |\Omega|^{\frac{1}{2}}\,\sup_{z \in \Omega} \left| \left({\partial \eta_{\varepsilon}\over\partial x_i}\right)(z) \right|,\,i=1,2$. {Hence, by the Mean Value Theorem, we derive the inequality}
		
		\begin{equation}\label{3eq6}
			\lvert \tilde{v}_{n}^{\varepsilon}(x) - \tilde{v}_{n}^{\varepsilon}(y) \rvert \leq (K_1 +K_2) \lVert x-y \rVert ,\qquad  x,\,y \in \Omega.
		\end{equation}
		Furthermore, {since $\left| \tilde{v}_{n}^{\varepsilon}  \right| \leq\varepsilon^{-2}\sup_{z \in \mathbb{R}^2}|\eta(z)|\int_{\Omega}|\tilde{v}_{n}(y)|dy$, using the H\"older's inequality together with the relation (\ref{3eq1}) we deduce}
		$$\left| \tilde{v}_{n}^{\varepsilon}  \right| \leq \, C \,  \varepsilon^{-2}  |\Omega|^{\frac{1}{2}},$$
		for {$\sup_{z \in \mathbb{R}^2}|\eta(z)|\leq 1$}. Then we derive from the  Azerl\`a-Ascoli theorem  (cf. \cite[Theorem 1.33]{RA1}) that there exists a subsequence {$\{\tilde{v}_{n_k}^{\varepsilon}\}_{k \geq 1}$ of $\{\tilde{v}_{n}^{\varepsilon}\}_{n \geq 1}$} which converges in $\mathcal{C}(\overline{\Omega})$ to some limit denoted by $\tilde{v}^{\varepsilon}$.
		Since $\lvert \Omega\rvert< \infty$, 
		$ \int_{\Omega} \mid \tilde{v}_{n_k}^{\varepsilon}-\tilde{v}^{\varepsilon} \mid^{2} dx \leq  |\Omega|\, \sup_{x \in \Omega}\lvert\left( \tilde{v}_{n_k}^{\varepsilon}-\tilde{v}^{\varepsilon}\right)(x) \rvert^{2} $.
		Therefore, $ \tilde{v}_{n_k}^{\varepsilon} \longrightarrow \tilde{v}^{\varepsilon}$ in $L^2(\Omega)$. 
		
		Since $\{\tilde{v}_{n_k}^{\varepsilon}\}_{k \geq 1}$ converges in $L^2(\Omega)$, it is a Cauchy sequence. Hence, for a constant $\delta>0$, there exists {$k_0 \geq 1$} such that $$j,k\geq k_0\Rightarrow \lVert \tilde{v}_{n_k}^{\varepsilon}-\tilde{v}_{n_j}^{\varepsilon}\rVert_{L^2(\Omega)} \leq \delta.$$
		Thus, from the inequality 
		
			\begin{equation*}
			\lVert\tilde{v}_{n_k} - \tilde{v}_{n_j}\rVert_{L^2(\Omega)}\leq \lVert \tilde{v}_{n_k}-\tilde{v}_{n_k}^{\varepsilon}\rVert_{L^2(\mathbb{R}^2)} +  \lVert \tilde{v}_{n_k}^{\varepsilon}-\tilde{v}_{n_j}^{\varepsilon}\rVert_{L^2(\Omega)} + \lVert \tilde{v}_{n_j}^{\varepsilon}-\tilde{v}_{n_j}\rVert_{L^2(\mathbb{R}^2)},
		\end{equation*}
		we obtain
		\begin{equation}\label{3eq8}
			k,j\geq k_0 \implies \lVert\tilde{v}_{k} -\tilde{v}_{j} \rVert_{L^2(\Omega)} \leq  \lVert \tilde{v}_{n_k}-\tilde{v}_{n_k}^{\varepsilon}\rVert_{L^2(\mathbb{R}^2)} +\delta  +  \lVert \tilde{v}_{n_j}^{\varepsilon}-\tilde{v}_{n_j}\rVert_{L^2(\mathbb{R}^2)}.
		\end{equation}
		
		{Letting $\varepsilon\to 0$, it follows that} 
		$$k,j\geq k_0\Rightarrow \lVert \tilde{v}_{n_k}-\tilde{v}_{n_j}\rVert_{L^2(\Omega)} \leq \delta.$$ 
		{Therefore, $\left\{\tilde{v}_{n_k}\right\}_{k\geq 1}$  is a Cauchy sequence in $L^2(\Omega)$. So, there exists $\tilde{v} \in L^2(\Omega)$ such that $\tilde{v}_{n_k}     
			\longrightarrow \tilde{v}$ in $L^2(\Omega)$. That is
			\begin{align*}
				0=\lim\limits_{k\rightarrow \infty}  \int_{\Omega}^{} \mid \tilde{v}_{n_k}-\tilde{v} \mid^{2}dx_2&
				=\lim\limits_{k\rightarrow\infty}\int_{\Omega}^{}\mid \rho^{\frac{1}{2}} u_{n_k}-v\mid^2dx
				&=\lim\limits_{k\rightarrow\infty}\int_{\Omega}^{}\mid  u_{n_k}-v\rho^{\frac{-1}{2}}\mid^2\rho dx.
		\end{align*}}
		Hence {$u_{n_k}\longrightarrow v\rho^{\frac{-1}{2}}$} in $L^2(\Omega,\, \rho)$. Therefore, $\{u_{n_k}\}_{k \geq 1}$ is a subsequence of $\{u_n\}_{n \geq 1}$ which converges in $L^2(\Omega,\, \rho)$.  
		\item[Step 2.]  $\Omega$ is unbounded.\\ Let us define $B_m=B(O,m)$  and $D_m= \overline{\mathbb{R}^2\setminus B_m}$. Since $\Omega\cap B_m$ is bounded
		for all {$m \geq 1$}, the set of restrictions of elements of  $N=\left\{u_n,n\geq 1\right\}$ on $\Omega\cap B_m$ is a {precompact} subset of $L^2({\Omega\cap B_m, \rho})$.
		Let $\delta$ be a positive number. For $u\in N$, the sequence $\left\{g_m\right\}_{m\geq 1}$, $g_m=1_{\Omega\cap D_m}\lvert u\rvert^2$, converges pointwise to $1_{\emptyset} \lvert u\rvert^2=0$ and   $\lvert g_m\rvert \leq  1_{\Omega}\lvert u \rvert^2$. Moreover {$\int_{\Omega} 1_{\Omega}\lvert u \rvert^2\rho dx =\int_{\Omega}^{}\lvert u \rvert^2\rho dx < +\infty$},  for  $N\subset L^2(\Omega,\,  \rho)$. Then it follows from the Lebesgue dominated convergence theorem  that {$\lim\limits_{m\to +\infty}\int_{\Omega}g_m\rho\, dx= \int_{\Omega} 0 \rho\,dx=0$.}
		Thus, for all $\delta>0,$ there exists $m_0\in \mathbb{N}$ such that $m\geq m_ 0\Rightarrow \int_{\Omega\cap D_m} \lvert u\rvert^2\rho dx < \delta.$
		We deduce from \cite[Proposition 2.1]{JH} that $N$ is {precompact} in $L^2(\Omega,\,\rho).$
	\end{enumerate}
\end{proof}

\begin{remark}{We can observe that we have not made use of (\ref{te1}).} 
\end{remark} 

	\section{Application}
	A generalised Helmholtz operator also called a Helmholtz operator with variable coefficients (cf. \cite{CD})  is a second-order partial differential operator of the form $L=-G+\lambda$, where L is a second-order elliptic operator with variable coefficients.
	In this section, we investigate eigenvalue problem associated with the generalised  Helmholtz operator
		\begin{eqnarray}\label{Helm}
		Lu&=&-x_1(1-x_1)\partial_{x_1}^2u+2x_1x_2\partial_{x_1x_2}^2u-x_2(1-x_2)\partial_{x_2}^2u-(\alpha+1-(\alpha+\beta+\gamma+3)x_1)\partial_{x_1}u\\
		&&-(\beta+1-(\alpha+\beta+\gamma+3)x_2)\partial_{x_2}u+(2+x_1^2+x_2^2)u\nonumber
	\end{eqnarray}
	
on the triangular domain  $\Omega=\left\{(x_1,x_2);x_1,\,x_2\geq 0\,{\rm and}\, x_1+x_2\leq 1\right\}$. 
	
$$\lim_{x \to x_0}x_1(1-x_1)\zeta_1^2-2x_1x_2\zeta_1\zeta_2+x_2(1-x_2)\zeta_2^2
=0,\;x_0\in\left\{(0,\,0),\,(1,\,0),\,(0,\,1)\right\}\subset\partial\Omega.$$
So, the elliptic condition is violated at some points of $\overline{\Omega}$. Therefore, the operator L is degenerate and cannot be analysed using classical Sobolev spaces, but rather requires weighted Sobolev spaces.\\  Multiplying both sides of (\ref{Helm}) by the  two-variable  weight function 
	\begin{equation}\label{TW}
		\rho(x_1,x_2)=x_1^{\alpha}x_2^{\beta}\left(1-x_1-x_2\right)^{\gamma},\;\; (x_1,\,x_2)\in\Omega,\, \alpha,\,\beta,\,\gamma >-1.
	\end{equation} and using the identity \begin{equation}\label{peq}
		div(\rho(x_1,\,x_2)\Phi)=\left(\alpha+1-\left(\alpha+\beta+\gamma+3\right)x_1,\,\beta+1-\left(\alpha+\beta+\gamma+3\right)x_2\right)\rho(x_1,\,x_2),
	\end{equation} where $\Phi$ is the symmetric positive-definite matrix  
	\begin{equation}\label{matrix}
		\varPhi=\left(\begin{matrix}
			x_1(1-x_1)&-x_1x_2\\
			-x_1x_2&x_2(1-x_2)
		\end{matrix}\right),
	\end{equation}
	 (\ref{Helm}) becomes\\ $$L{u}=-\frac{1}{\rho(x_1,x_2)}div(\rho(x_1,x_2)\Phi\nabla u)+(2+{x_1^2+x_2^2})u.$$
	 
	Therefore, under the Neumann boundary condition in the first item of the Remark\,\ref{bcremark}, the second-order partial differential equation $Lu=f$ reads
	
	\begin{eqnarray}\label{system}
	\left\{\begin{array}{ccc}
			&-\frac{1}{\rho(x_1,x_2)}div(\rho(x_1,x_2)\Phi\nabla u)+(2+{x_1^2+x_2^2})u=f\label{Helm1},\,{\rm on}\,\Omega,\, \\
			\\
			&
			\left(\rho \Phi\,\nabla\,u\right)\cdot\overrightarrow{n}=0,\;{\rm on}\,\partial\Omega  \label{Hembc},
		\end{array}\right.
	\end{eqnarray}
	where $\varPhi$ is the matrix (\ref{matrix}) and $\rho$ is the weight function  (\ref{TW}) .\\
	The objective of this section is to use a variational method to prove the existence and uniqueness of a weak solution of (\ref{system})  in the space $W\left(\Omega,\, \rho,\,\rho\Phi\right)$ and derive by means of  \cite[Theorem II.6.5]{BOYER} the existence of an orthogonal basis of $L^2(\Omega,\, \rho)$ formed by solutions of the eigenvalue problem associated with (\ref{system}). 

	\subsection{Weak formulation}
	This subsection is devoted to weak formulation of (\ref{system}).
	One can check by direct computation that the weight function (\ref{TW}) is classical (see \cite{NJN}). Since the domain $\Omega$ is bounded, from the Remark\,\ref{bcremark}, $\rho$ satisfies 
	\begin{equation}\label{pbc}
		\left(\rho \Phi\,\nabla\,p\right)\cdot\overrightarrow{n}=0,\;{\rm on}\,\partial\Omega,\,p\in \mathcal{P}.
	\end{equation}
	
	\begin{lemma}\label{Lemmabc} Let $u$ be an element of $W(\Omega,\,\rho ,\,\rho \Phi).$  $u$ satisfies the boundary condition in (\ref{Hembc}) a.e on $\partial\Omega$.
	\end{lemma}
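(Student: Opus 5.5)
The plan is to verify the boundary condition on polynomials by direct computation and then pass to the closure defining $W(\Omega,\,\rho,\,\rho\Phi)$.

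\emph{Polynomials.} By (\ref{pbc}), every $p\in\mathcal{P}$ satisfies $(\rho\Phi\nabla p)\cdot\overrightarrow{n}=0$ on $\partial\Omega$. I would check this directly, because the mechanism matters later. On the edge $x_1=0$ we have $\overrightarrow{n}=(-1,0)^t$. The first row of $\Phi$ there is $(0,0)$, so $(\Phi\nabla p)\cdot\overrightarrow{n}=-(x_1(1-x_1)\partial_{x_1}p-x_1x_2\partial_{x_2}p)=0$. The edge $x_2=0$ is handled the same way. On the edge $x_1+x_2=1$ we take $\overrightarrow{n}=(1,1)^t/\sqrt2$. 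Then
\[
(\Phi\nabla p)\cdot(1,1)^t=x_1(1-x_1-x_2)\partial_{x_1}p+x_2(1-x_1-x_2)\partial_{x_2}p=0.
\]
So $(\Phi\nabla p)\cdot\overrightarrow{n}=0$ pointwise on $\partial\Omega$, already before multiplying by $\rho$. This is the key structural fact: $\Phi^t\overrightarrow{n}$ vanishes identically on each edge.

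\emph{Passage to the closure.} Let $u\in W(\Omega,\,\rho,\,\rho\Phi)$ and choose $p_k\in\mathcal{P}$ with $p_k\to u$ in the norm (\ref{norm}). Then $\nabla p_k\to\nabla u$ in $L^2(\Omega,\,\rho\Phi)$, where $\nabla u$ is the weak gradient of Definition \ref{dweakd}. The boundary quantity $(\rho\Phi\nabla u)\cdot\overrightarrow{n}$ has to be interpreted. I would use $(\rho\Phi\nabla u)\cdot\overrightarrow{n}=\rho\,\nabla u\cdot(\Phi\overrightarrow{n})$, which uses the symmetry of $\Phi$. On each edge the vector field $\Phi\overrightarrow{n}$ is identically $0$. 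Hence, whatever trace of $\nabla u$ is taken, along any sequence or a.e.\ limit, the product vanishes a.e.\ on $\partial\Omega$.

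To make this rigorous I would proceed in three steps. First, I would show that $\Phi(\nabla p_k-\nabla p_j)\cdot\overrightarrow{n}\equiv0$ on $\partial\Omega$, so the sequence of boundary fluxes is constantly $0$. Second, I would define the boundary flux of $u$ as the limit of the fluxes of the $p_k$, or equivalently weakly through Green's formula. Concretely, for any $v\in\mathcal{P}$, Proposition \ref{dderiv} applied to $p_k$ gives
\[
\int_\Omega v\,\mathrm{div}(\rho\Phi\nabla p_k)\,dx_1dx_2=-\int_\Omega(\nabla v)^t\rho\Phi\nabla p_k\,dx_1dx_2 ,
\]
with zero boundary term. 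Third, I would pass to the limit on the right using Lemma \ref{inlem}, which yields the same identity with zero boundary term for $u$. That is exactly the weak (a.e.) form of $(\rho\Phi\nabla u)\cdot\overrightarrow{n}=0$.

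\emph{Main obstacle.} The hard step is giving meaning to the trace of $\nabla u$ on $\partial\Omega$, since $u$ is only in a weighted space. I would bypass it in one of two ways. One option is the pointwise identity $\Phi\overrightarrow{n}=0$ on each open edge; the three corners form a null set, so they do not matter. The other is the weak Green-formula characterization above, which needs only $L^2(\Omega,\,\rho\Phi)$ convergence of gradients.
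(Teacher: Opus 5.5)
\textbf{There is a genuine gap in the passage from polynomials to a general $u\in W(\Omega,\rho,\rho\Phi)$.} Your computation $\Phi\overrightarrow{n}\equiv 0$ on each open edge is correct. It is really (\ref{pbc}) unpacked: by symmetry, $(\rho\Phi\nabla p)\cdot\overrightarrow{n}=\nabla p\cdot(\rho\Phi\overrightarrow{n})$, and taking $p=x_1,x_2$ gives $\rho\Phi\overrightarrow{n}=0$. But both of your readings of the limit step claim more than is true.

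\emph{The ``any trace'' claim.} You assert that $\rho\,\nabla u\cdot(\Phi\overrightarrow{n})$ vanishes ``whatever trace of $\nabla u$ is taken, along any sequence''. This ignores two facts: $\rho=+\infty$ on an edge whose exponent is negative, and $\nabla u$ has no finite trace in general. So the product is of type $\infty\cdot 0$. Here is a concrete counterexample. Take $-1<\alpha<0$, and $\chi\in C_c^\infty((a,b))$ with $0<a<b<1$, $\chi\ge 0$, $\chi\not\equiv 0$. Let $\theta(x_1)$ be a cutoff equal to $1$ near $0$ and to $0$ for $x_1\ge (1-b)/2$, and set $u=x_1^{-\alpha}\theta(x_1)\chi(x_2)$.
\begin{itemize}
\item The leading part of $(\nabla u)^t\rho\Phi\nabla u$ is $\alpha^2x_1^{-\alpha-1}$ (times bounded factors), which is integrable.
\item By dominated convergence, $u$ is the limit in the norm (\ref{norm}) of $(x_1+\varepsilon)^{-\alpha}\theta\chi\in C^\infty(\overline{\Omega})$. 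These are $C^1$-limits of polynomials on $\overline{\Omega}$, so $u\in W(\Omega,\rho,\rho\Phi)$.
\item Yet, for $\overrightarrow{n}=(-1,0)^t$, $(\rho\Phi\nabla u)\cdot\overrightarrow{n}=-\rho(\Phi\nabla u)_1\to\alpha\,x_2^{\beta}(1-x_2)^{\gamma}\chi(x_2)\neq 0$ as $x_1\to0^+$.
\end{itemize}

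\emph{The Green-formula route.} It breaks at the same place. Convergence in $W$ lets you pass to the limit in $\int_\Omega(\nabla v)^t\rho\Phi\nabla p_k$. It does not control $\int_\Omega v\,\mathrm{div}(\rho\Phi\nabla p_k)$, which involves second derivatives. For the $u$ above and $v\equiv1$, one finds $\int_\Omega\mathrm{div}(\rho\Phi\nabla u)\,dx_1dx_2=\alpha\int_a^b x_2^\beta(1-x_2)^\gamma\chi(x_2)\,dx_2\neq0$, while the right-hand side is $0$. Defining the flux of $u$ as the limit of the (zero) fluxes of the $p_k$ is not ``equivalent'' to this. It is a tautology that never refers to $\nabla u$.

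\textbf{How the paper proceeds, and what survives.} The paper attempts no trace. Using (\ref{pbc}), it writes $1_{\partial\Omega}(\rho\Phi\nabla u)\cdot\overrightarrow{n}=1_{\partial\Omega}(\rho\Phi\nabla(u-p_k))\cdot\overrightarrow{n}$. It bounds this pointwise by $\rho\|\Phi\nabla(u-p_k)\|$ and integrates over $\overline{\Omega}$ against $dx_1dx_2$. H\"older and $|\Phi|_{op}\le C$ then give the bound $C'\|u-p_k\|_{W(\Omega,\rho,\rho\Phi)}\to0$. That integral is taken against area measure, for which $\partial\Omega$ is null. So the paper's ``a.e.'' holds only in that weak sense. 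That sense, or the formal one where $\rho\Phi\overrightarrow{n}=0$ is multiplied by an arbitrary finite boundary value of $\nabla u$, is what can be defended on all of $W(\Omega,\rho,\rho\Phi)$.

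What your structural observation does prove cleanly is the vanishing of the flux for $u$ with gradient bounded up to the boundary. Away from the corners near the edge $x_1=0$, one has $|\rho\Phi\overrightarrow{n}|=O(x_1^{1+\alpha})\to0$, and similarly on the other edges. This is exactly what the subsequent Proposition, with $u\in\mathcal{C}^2(\overline{\Omega})$, actually uses.
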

	\begin{proof}
		Let  $u$ be an element of $W(\Omega,\,\rho ,\,\rho \Phi).$  There exists a sequence of polynomials {$\{p_k\}_{k \geq 1}$} converging to $u$, with respect to the $W^{1,2}(\Omega,\,\rho ,\,\rho \Phi)$ norm. From (\ref{pbc}), 
		
		\begin{align*}
			1_{\partial\Omega}\left(\rho \Phi\,\nabla\,u\right)\cdot\overrightarrow{n}=1_{\partial\Omega}\left(\rho \Phi\,\nabla\,(u-p_k)\right)\cdot\overrightarrow{n},\,k\geq 1.
		\end{align*}
		
		Using the Cauchy-Schwarz inequality, we deduce
		\begin{align*}
			\lvert1_{\partial\Omega}\left(\rho \Phi\,\nabla\,u\right)\cdot\overrightarrow{n}\rvert\leq1_{\partial\Omega}\|\left(\rho \Phi\,\nabla\,(u-p_k)\right)\| \|\overrightarrow{n_j}\|,\;k\geq 1.
		\end{align*}
		
		Since $\|\overrightarrow{n}\|=1$,	$\lvert1_{\partial\Omega}\left(\rho \Phi\,\nabla\,u\right)\cdot\overrightarrow{n}\rvert\leq 1_{\overline{\Omega}}\|\left( \Phi\,\nabla\,(u-p_k)\right)\|\rho ,\;k\geq 1.$	
		Integrating both sides over $\overline{\Omega}$ and using  {H\"older} Inequality as well as the fact that 
			$$\int_{\Omega}\rho(x_1,x_2)dx_1dx_2=\frac{\Gamma(\alpha+1)\Gamma(\beta+1)\Gamma(\gamma+1)}{\Gamma(\alpha+\beta+\gamma+3)}$$ (see \cite[p.80]{Yuan}) we deduce
		\begin{equation}\label{e11}
			\int_{\overline{\Omega}}\left |1_{\partial\Omega}\left(\rho  \Phi\,\nabla\,u\right)\cdot\overrightarrow{n}\right |dx_1dx_2\leq\sqrt{\frac{\Gamma(\alpha+1)\Gamma(\beta+1)\Gamma(\gamma+1)}{\Gamma(\alpha+\beta+\gamma+3)}} \sqrt{\int_{\Omega}\|\Phi \nabla (u-p_k)\|^2\rho dx_1dx_2}.
		\end{equation}
		
		One can observe that.
		\begin{equation*}
			\|\Phi \nabla (u-p_k)\|^2=\left(\nabla \left(u-p_k\right)\right)^t\Phi^t\Phi\nabla (u-p_k)\leq \lvert \Phi\rvert_{op}\left(\nabla\left(u-p_k\right)\right)^t\Phi\nabla (u-p_k), 
		\end{equation*} 
		where $\lvert . \rvert_{op}$ denotes the norm on $(2,\,2)$-matrices,
		$|\Phi|_{op}=sup\left\{\left |\lambda\right|; \lambda\, {\rm eigenvalue\, of}\, \Phi\right\}$.
		Since the coefficients of the matrix $\Phi$ are polynomials and the domain $\Omega$ is bounded, there exists
		$C>0$ such that $\lvert \Phi\rvert_{op}\leq C$. Therefore,
		\begin{equation*}
			\|\Phi \nabla (u-p_k)\|^2\leq C\left(\nabla (u-p_k)\right)^t\Phi \nabla \left(u-p_k)\right).
		\end{equation*}
		Integrating both sides on $\Omega$, with respect to the measure $\rho dx_1dx_2$, and using the definition of the $W\left(\Omega,\,\rho,\,\rho\Phi\right)$ norm,  we deduce
		$$\int_{\Omega}\|\Phi \nabla (u-p_k)\|^2\rho dx_1dx_2\leq C\|u-p_k\|_{W\left(\Omega,\,\rho,\,\rho\Phi\right)}^2.$$ 
		Combining with (\ref{e11}) we get
			\begin{equation*}
				\int_{\overline{\Omega}}\left |1_{\partial\Omega}\left(\rho  \Phi\,\nabla\,u\right)\cdot\overrightarrow{n}\right |dx_1dx_2
				\leq \sqrt{\frac{C\Gamma(\alpha+1)\Gamma(\beta+1)\Gamma(\gamma+1)}{\Gamma(\alpha+\beta+\gamma+3)}} \|u-p_k\|_{W\left(\Omega,\,\rho ,\,\rho \Phi\right)},
		\end{equation*}
		where $\Gamma$ denotes the usual gamma function. Letting $n\to \infty$ and using the fact that {$\{p_k\}_{n \geq 1}$} converges to $u$ with respect to the  $W\left(\Omega,\,\rho ,\,\rho \Phi\right)$ norm, it follows that $\int_{\overline{\Omega}}\left |1_{\partial\Omega}\left(\rho  \Phi\,\nabla\,u\right)\cdot\overrightarrow{n}\right |dx_1dx_2=0$. 
		Therefore,
		$$\left |1_{\partial\Omega}\left(\rho  \Phi\,\nabla\,u\right)\cdot\overrightarrow{n}\right |=0\; {\rm a.e}.$$ 	  
	\end{proof}
	\begin{proposition} If the function $f$ in (\ref{system}) belongs to $L^2(\Omega,\,\rho)$ and there exists an element 
			$u$ of \\ $\mathcal{C}^2(\overline{\Omega})\cap W(\Omega,\,\rho,\,\rho\Phi)$ solution of the system (\ref{system}), then $u$ satisfies the weak formulation
			\begin{equation}\label{weakHelm}
				\int_{\Omega}(\nabla v)^t\rho \Phi\nabla udx_1dx_2+\int_{\Omega}\left(2+{x_1^2+x_2^2}\right)vu\rho dx_1dx_2=\int_{\Omega}vf\rho dx_1dx_2,\;v\in W(\Omega,\,\rho,\rho\Phi).
			\end{equation}
		\end{proposition}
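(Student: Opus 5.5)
The plan is to multiply the strong equation by a test function, integrate by parts with polynomial test functions using the boundary condition, and then pass to general $v\in W(\Omega,\rho,\rho\Phi)$ by density of $\mathcal{P}$.

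\emph{Step 1 (polynomial test functions).} Fix $p\in\mathcal{P}$. Multiplying the first equation of (\ref{system}) by $p\rho$ and integrating over $\Omega$ gives
\begin{equation*}
-\int_{\Omega}p\,div(\rho\Phi\nabla u)\,dx_1dx_2+\int_{\Omega}(2+x_1^2+x_2^2)pu\rho\, dx_1dx_2=\int_{\Omega}pf\rho\, dx_1dx_2 .
\end{equation*}
All three integrals are finite. Since $\Omega$ is bounded, $2+x_1^2+x_2^2$ and $p$ are bounded. Also $u,f\in L^2(\Omega,\rho)$ and $\rho$ is integrable.

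Next use the identity $div(p\rho\Phi\nabla u)=p\,div(\rho\Phi\nabla u)+(\nabla p)^t\rho\Phi\nabla u$, which is the same computation as (\ref{deq}) with the roles of $u$ and $v$ exchanged. The divergence theorem on the bounded domain $\Omega$ with piecewise $C^1$ boundary then gives
\begin{equation*}
\int_{\Omega}div(p\rho\Phi\nabla u)\,dx_1dx_2=\int_{\partial\Omega}p\,(\rho\Phi\nabla u)\cdot\overrightarrow{n}\,ds .
\end{equation*}
The right-hand side vanishes because $u$ satisfies the boundary condition of (\ref{system}); Lemma \ref{Lemmabc} also guarantees this a.e.\ for $u\in W(\Omega,\rho,\rho\Phi)$. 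This yields (\ref{weakHelm}) for every $v=p\in\mathcal{P}$.

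\emph{Step 2 (density).} Let $v\in W(\Omega,\rho,\rho\Phi)$ and take polynomials $p_k\to v$ in the norm (\ref{norm}); such a sequence exists by Definition \ref{WSS}. Each term of (\ref{weakHelm}) is continuous in $v$ for this norm.
\begin{itemize}
\item The gradient term is controlled by the first item of Lemma \ref{inlem}: $\left|\int(\nabla(v-p_k))^t\rho\Phi\nabla u\right|\le\|\nabla(v-p_k)\|_{L^2(\Omega,\rho\Phi)}\|\nabla u\|_{L^2(\Omega,\rho\Phi)}$. Here $\nabla u\in L^2(\Omega,\rho\Phi)$ since $u\in W$.
\item The zero-order term is bounded by $\sup_{\Omega}(2+x_1^2+x_2^2)\,\|v-p_k\|_{L^2(\Omega,\rho)}\|u\|_{L^2(\Omega,\rho)}$.
\item The right-hand side is bounded by $\|v-p_k\|_{L^2(\Omega,\rho)}\|f\|_{L^2(\Omega,\rho)}$.
\end{itemize}
Letting $k\to\infty$ in the identity for $p_k$ gives (\ref{weakHelm}) for $v$.

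\emph{Main obstacle.} The delicate point is the integration by parts in Step 1, because $\rho$ may blow up at the boundary (for instance when $\alpha<0$). The boundary integral is then only meaningful through the product $\rho\Phi\nabla u$, which is exactly the quantity the Neumann condition (\ref{pbc}) and Lemma \ref{Lemmabc} control.

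If the divergence theorem cannot be applied directly, I would apply it on interior approximating domains $\Omega_\varepsilon=\{x_1,x_2>\varepsilon,\ x_1+x_2<1-\varepsilon\}$. There $\rho$ is smooth, and one passes $\varepsilon\to0$ using dominated convergence as in the proof of Proposition \ref{dderiv}. The domination comes from $u\in\mathcal{C}^2(\overline{\Omega})$, $p$ being bounded, and the fact that $\Phi$ vanishes suitably on each edge: for example $\phi_{11}=x_1(1-x_1)$ and $\phi_{12}=-x_1x_2$ both carry the factor $x_1$. This makes the boundary flux $p\rho(\Phi\nabla u)\cdot\overrightarrow{n}$ on each edge of $\partial\Omega_\varepsilon$ behave like $\varepsilon^{1+\alpha}$ (respectively $\varepsilon^{1+\beta},\varepsilon^{1+\gamma}$) times an integrable factor, so it tends to $0$ because $\alpha,\beta,\gamma>-1$.
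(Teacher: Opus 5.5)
Your proposal is correct and follows essentially the paper's proof. You multiply by $p\rho$ with $p\in\mathcal{P}$, integrate by parts, and kill the boundary flux with the Neumann condition (the paper invokes the divergence theorem and Lemma~\ref{Lemmabc}); you then pass to general $v\in W(\Omega,\rho,\rho\Phi)$ by polynomial density, using Cauchy--Schwarz on each term. Your extra approximation by the interior triangles $\Omega_\varepsilon$, with edge fluxes of order $\varepsilon^{1+\alpha}$, $\varepsilon^{1+\beta}$, $\varepsilon^{1+\gamma}$, validly justifies the integration by parts, which the paper simply asserts even though $\rho$ may be singular on $\partial\Omega$.
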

		\begin{proof}
			If $u\in \mathcal{C}^2(\overline{\Omega})\cap W(\Omega,\,\rho,\,\rho\Phi)$ is a solution of the system (\ref{system}), then multiplying the first equation of (\ref{system}) by $v\rho $,  $v\in\mathcal{P}$ and using the relation
			$$div(v\rho \Phi\nabla u)=vdiv(\rho \Phi\nabla u)+(\nabla v)^t\rho \Phi\nabla u,$$ we obtain 
			\begin{equation}\label{Helm2}
				-div(v\rho \Phi\nabla u)+(\nabla v)^t\rho \Phi\nabla u+\left(2+{x_1^2+x_2^2}\right)vu\rho = vf\rho .
			\end{equation}
			Integrating both sides over $\Omega$, it follows that		
			\begin{align*}
				\lefteqn{-\int_{\Omega}div(v\rho \Phi\nabla u)dx_1dx_2}&\\
				&+\int_{\Omega}(\nabla v)^t\rho \Phi\nabla udx_1dx_2+\int_{\Omega}\left(2+{x_1^2+x_2^2}\right)vu\rho dx_1dx_2=\int_{\Omega}vf\rho dx_1dx_2.
			\end{align*}
			Using the divergence theorem as well as  the Lemma\,\ref{Lemmabc}, we deduce $$\int_{\Omega}div(v\rho \Phi\nabla u)dx_1dx_2=\int_{{\Omega}}\left(v\rho\Phi\nabla u\right)\cdot\overrightarrow{n}=0.$$ Therefore, $u$ satisfies (\ref{weakHelm}) for all $v\in\mathcal{P}$.\\ 
			If $v\in W(\Omega,\,\rho ,\rho\Phi)$, then there exists a sequence $\{p_k\}_{k\geq 1}$ of $\mathcal{P}$ converging to $v$ with respect to the $W(\Omega,\,\rho ,\rho\Phi)$ norm. 
			Since 
			\begin{equation*}
				\left\lvert\int_{\Omega}(\nabla p_k)^t\rho \Phi\nabla u dx_1dx_2-\int_{\Omega}(\nabla v)^t\rho \Phi\nabla u dx_1dx_2 \right\rvert=\left\lvert\int_{\Omega}(\nabla (p_k-v))^t\rho \Phi\nabla u dx_1dx_2\right\rvert,
			\end{equation*}
			the use of Cauchy-Schwarz inequality on the scalar product of $L^2(\Omega,\,\rho \Phi)$ yields 
			\begin{align*}
				\left	\lvert\int_{\Omega}(\nabla p_k)^t\rho \Phi\nabla u-\int_{\Omega}(\nabla v)^t\rho \Phi\nabla u \right\rvert&\leq\|p_k-v\|_{L^{2}\left(\Omega,\,\rho \Phi\right)}\|u\|_{L^{2}\left(\Omega, \,\rho \Phi\right)}\\
				&\leq \|p_k-v\|_{W\left(\Omega,\, \rho ,\,\rho \Phi\right)}\|u\|_{W\left(\Omega,\, \rho ,\,\rho \Phi\right)}.
			\end{align*}
			Letting $k\to\infty$ and using the fact that $\{p_k\}_{k\geq 1}$  converges in $W(\Omega,\,\rho ,\rho\Phi)$ with respect to\\ the $W(\Omega,\,\rho ,\rho\Phi)$ norm, it follows that 
			\begin{equation*}
				\lim_{k \to \infty }\int_{\Omega}(\nabla p_k)^t\rho \Phi\nabla u dx_1dx_2=\int_{\Omega}(\nabla v)^t\rho \Phi\nabla u dx_1dx_2.
			\end{equation*}
			 
			 Observing that 
			 \begin{equation}\label{bcc}
			 	0\leq x_1^2+x_2^2\leq 1,\; (x_1,\,x_2)\in\Omega,
			 \end{equation}
			 we have
			\begin{eqnarray*}
			\left\lvert	\int_{\Omega}\left(2+{x_1^2+x_2^2}\right)p_ku\rho   dx_1dx_2-\int_{\Omega}\left(2+{x_1^2+x_2^2}\right)vu\rho  dx_1dx_2\right\rvert&=&\left\lvert\int_{\Omega}\left(2+{x_1^2+x_2^2}\right)\left( p_k-v\right)u\rho dx_1dx_2\right\rvert\\
			&\leq&\,3 \left\lvert\int_{\Omega}\lvert p_k-v\rvert \lvert u\rvert\rho dx_1dx_2\right\rvert\\
			&\leq&3\|p_k-v\|_{W\left(\Omega,\, \rho ,\,\rho \Phi\right)}\|u\|_{W\left(\Omega,\, \rho ,\,\rho \Phi\right)}.
			\end{eqnarray*}
			Letting $k\to\infty$, we deduce
			\begin{equation*}
				\lim_{k \to \infty }\int_{\Omega}\left(2+{x_1^2+x_2^2}\right)p_ku\rho   dx_1dx_2\rho   dx_1dx_2=\int_{\Omega}\left(2+{x_1^2+x_2^2}\right)vu\rho   dx_1dx_2  dx_1dx_2.
			\end{equation*}
			In a similar way, we prove that:
			\begin{equation*}
				\lim_{k \to \infty }\int_{\Omega}p_kf\rho   dx_1dx_2=\int_{\Omega}vf\rho\rho  dx_1dx_2.
			\end{equation*}
			
			Therefore, since
			\begin{equation*}
				\int_{\Omega}(\nabla p_k)^t\rho \Phi\nabla udx_1dx_2+\int_{\Omega}\left(2+{x_1^2+x_2^2}\right)p_ku\rho dx_1dx_2=\int_{\Omega}p_kf\rho dx_1dx_2,\,k\geq 1,
			\end{equation*}
			letting $k\to\infty$, we conclude that:
			\begin{equation*}
				\int_{\Omega}(\nabla v)^t\rho \Phi\nabla udx_1dx_2+\int_{\Omega}\left(2+{x_1^2+x_2^2}\right)vu\rho dx_1dx_2=\int_{\Omega}vf\rho dx_1dx_2.
			\end{equation*}
	\end{proof}
	
	\begin{remark}
		Any $u\in W\left(\Omega,\,\rho ,\,\rho \Phi\right)$ satisfying (\ref{weakHelm}) is called weak solution of (\ref{system}).
	\end{remark}
	\subsection{Existence of weak solution and orthogonal basis}
	\begin{proposition}\label{EXIST} For $f\in L^2\left(\Omega,\,\rho \right)$, there exists  in  $W(\Omega,\,\rho ,\,\rho \Phi)$ a unique weak solution, $u$, of  (\ref{system}).
	\end{proposition}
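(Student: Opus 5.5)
The plan is to apply the Lax--Milgram theorem in the Hilbert space $W(\Omega,\,\rho,\,\rho\Phi)$. This space is a closed subspace of the Hilbert space $W^{1,2}(\Omega,\,\rho,\,\rho\Phi)$, being by definition the closure of $\mathcal{P}$ there, so it is itself a Hilbert space with the inner product $\langle\cdot,\cdot\rangle_{W^{1,2}(\Omega,\,\rho,\,\rho\Phi)}$. On it I define the bilinear form
\begin{equation*}
a(u,v)=\int_{\Omega}(\nabla v)^t\rho\Phi\nabla u\,dx_1dx_2+\int_{\Omega}\left(2+x_1^2+x_2^2\right)uv\rho\, dx_1dx_2
\end{equation*}
and the linear functional $\ell(v)=\int_{\Omega}vf\rho\, dx_1dx_2$. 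The weak formulation (\ref{weakHelm}) then reads $a(u,v)=\ell(v)$ for all $v\in W(\Omega,\,\rho,\,\rho\Phi)$.

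The key steps are as follows.

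\textbf{Symmetry.} Since $\Phi$ is symmetric, $a$ is symmetric.

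\textbf{Continuity.} By the Cauchy--Schwarz inequality in $L^2(\Omega,\,\rho\Phi)$ (first item of Lemma \ref{inlem}) and the bound (\ref{bcc}), which gives $2\le 2+x_1^2+x_2^2\le 3$ on $\Omega$, we get
\begin{equation*}
|a(u,v)|\le \|\nabla u\|_{L^2(\Omega,\,\rho\Phi)}\|\nabla v\|_{L^2(\Omega,\,\rho\Phi)}+3\|u\|_{L^2(\Omega,\,\rho)}\|v\|_{L^2(\Omega,\,\rho)}\le 3\|u\|_{W}\|v\|_{W}.
\end{equation*}

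\textbf{Coercivity.} Using $2+x_1^2+x_2^2\ge 2\ge 1$ and the positive semi-definiteness of $\Phi$ on $\Omega$,
\begin{equation*}
a(u,u)\ge \int_\Omega(\nabla u)^t\rho\Phi\nabla u\,dx_1dx_2+\int_\Omega u^2\rho\,dx_1dx_2=\|u\|^2_{W(\Omega,\,\rho,\,\rho\Phi)}.
\end{equation*}

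\textbf{Boundedness of $\ell$.} By H\"older's inequality, $|\ell(v)|\le \|f\|_{L^2(\Omega,\,\rho)}\|v\|_{L^2(\Omega,\,\rho)}\le \|f\|_{L^2(\Omega,\,\rho)}\|v\|_{W}$, so $\ell$ is a bounded linear functional on $W(\Omega,\,\rho,\,\rho\Phi)$.

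With these four facts, Lax--Milgram gives a unique $u\in W(\Omega,\,\rho,\,\rho\Phi)$ with $a(u,v)=\ell(v)$ for all $v$. Because $a$ is symmetric, the Riesz representation theorem applied to the inner product $a(\cdot,\cdot)$, which is equivalent to the $W$-inner product, would suffice just as well. Uniqueness can also be seen directly: if $u_1,u_2$ are two solutions, then $a(u_1-u_2,u_1-u_2)=0$, and coercivity forces $u_1=u_2$.

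The only point needing care is the setting rather than any estimate. One must confirm that $W(\Omega,\,\rho,\,\rho\Phi)$ is complete, which holds because it is a closed subspace of the Hilbert space $W^{1,2}(\Omega,\,\rho,\,\rho\Phi)$. One must also check that $\Phi$ from (\ref{matrix}) is positive semi-definite on $\Omega$. Its determinant is $x_1x_2(1-x_1-x_2)\ge 0$ and its diagonal entries $x_1(1-x_1)$, $x_2(1-x_2)$ are nonnegative there, and this is what makes $\int_\Omega(\nabla u)^t\rho\Phi\nabla u$ nonnegative in the coercivity step.
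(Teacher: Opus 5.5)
Your proposal is correct and takes the same route as the paper: coercivity from $2+x_1^2+x_2^2\ge 1$, continuity via Cauchy--Schwarz together with $x_1^2+x_2^2\le 1$, and boundedness of $v\mapsto\int_\Omega fv\rho\,dx_1dx_2$ by H\"older, followed by Lax--Milgram on $W(\Omega,\,\rho,\,\rho\Phi)$. Your additions only make explicit what the paper leaves implicit: completeness of $W(\Omega,\,\rho,\,\rho\Phi)$ as a closed subspace, the sharper continuity constant $3$ in place of $4$, and the check $\det\Phi=x_1x_2(1-x_1-x_2)\ge 0$.
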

	\begin{proof}
		For the proof of this proposition, we use \cite[Theorem II.2.5]{BOYER}.
		Let $a$ be the operator defined   from $W(\Omega,\,\rho ,\,\rho \Phi)\times W(\Omega,\,\rho ,\,\rho \Phi)$ to $\mathbb{R}$ by 
		$$ a(u,v)=	\int_{\Omega}(\nabla v)^t\rho \Phi\nabla udx_1dx_2+\int_{\Omega}\left(2+{x_1^2+x_2^2}\right)uv\rho dx_1dx_2.$$  The functional $a$  is clearly bilinear. Since $2+x_1^2+x_2^2>1, (x_1,\,x_2)\in\Omega$,
		$$ a(u,u)>\int_{\Omega}(\nabla u)^t\rho \Phi\nabla udx_1dx_2+\int_{\Omega}u^2\rho dx_1dx_2=\|u\|_{W(\Omega,\,\rho ,\,\rho \Phi)}^2,\;u\in W(\Omega,\,\rho ,\,\rho \Phi).$$
		 Therefore, 
		$a$ is coercive.\\ For all $u,v\in W\left(\Omega,\,\rho ,\,\rho \Phi\right).$ Since $0\leq x_1^2+x_2^2\leq 1$ on $\Omega$, it follows that
		$$|a(u,\,v)|\leq |\langle \nabla u,\,\nabla v\rangle_{L^2(\Omega,\,\rho \Phi)}|+3\langle u,\,v\rangle_{L^2(\Omega,\,\rho )}.$$ The use of Cauchy-Schwarz Inequality leads to
		\begin{align*}
			|a(u,\,v)|\leq \|\nabla u\|_{L^2(\Omega,\,\rho \Phi)}\|\nabla v\|_{L^2(\Omega,\,\rho \Phi)}+3\|u\|_{L^2(\Omega,\,\rho )}\|v\|_{L^2(\Omega,\,\rho )}.
		\end{align*}
		 Using the definition of the $W\left(\Omega,\,\rho ,\,\rho \Phi\right)$ norm, we derive
		$$|a(u,\,v)|\leq 4 \|u\|_{W(\Omega,\,\rho ,\,\rho \Phi)}\|v\|_{W(\Omega,\,\rho ,\,\rho \Phi)}.$$ Hence, $a$ is a continuous bilinear functional.\\
		Let us consider the linear operator $l$ from $W(\Omega,\,\rho \Phi)$ to $\mathbb{R}$ by $l(v)=\int_{\Omega}fv\rho dx_1dx_2$. The use of H\"older inequality and the definition of  the $W\left(\Omega,\,\rho ,\,\rho \Phi\right)$ norm yield  $|l(v)|\leq \|f\|_{L^2(\Omega,\,\rho )}\|v\|_{L^2(\Omega,\,\rho )}\leq C\|v\|_{W\left(\Omega,\,\rho ,\,\rho \Phi\right)},$ where $C=\|f\|_{L^2(\Omega,\,\rho )}<\infty,$ for $f\in L^2(\Omega,\,\rho )$. Therefore, $l$ is a continuous linear functional.  Thus, from the Lax-Milgram Theorem \cite[Theorem II.2.5 ]{BOYER}, there exists a unique $u\in W\left(\Omega,\,\rho ,\,\rho \Phi\right)$ such that $a(u,v)=l(v),$ for all $v\in W\left(\Omega,\, \rho ,\rho \Phi\right).$ 
	\end{proof}
	
	\begin{proposition}
		$L^2(\Omega,\,\rho )$ has an orthogonal basis formed by eigenfunctions of $L$. Moreover, its eigenvalues are positive and can be ordered in a sequence tending to $\infty$.
	\end{proposition}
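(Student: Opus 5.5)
The plan is to apply the abstract spectral theorem \cite[Theorem II.6.5]{BOYER} for compact self-adjoint operators to the solution operator of (\ref{system}). By Proposition \ref{EXIST}, for every $f\in L^2(\Omega,\,\rho)$ there is a unique $u=Tf\in W(\Omega,\,\rho,\,\rho\Phi)$ with $a(Tf,v)=\langle f,v\rangle_{\rho}$ for all $v\in W(\Omega,\,\rho,\,\rho\Phi)$. This defines a linear operator $T:L^2(\Omega,\,\rho)\to L^2(\Omega,\,\rho)$.

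First, $T$ is bounded into $W(\Omega,\,\rho,\,\rho\Phi)$. Taking $v=Tf$ and using the coercivity $a(u,u)\geq\|u\|^2_{W(\Omega,\,\rho,\,\rho\Phi)}$, we get $\|Tf\|^2_{W}\leq \|f\|_{L^2(\Omega,\rho)}\|Tf\|_{L^2(\Omega,\rho)}\leq\|f\|_{L^2(\Omega,\rho)}\|Tf\|_W$, so $\|Tf\|_W\leq\|f\|_{L^2(\Omega,\rho)}$.

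Second, $T$ is compact on $L^2(\Omega,\,\rho)$. Since $W(\Omega,\,\rho,\,\rho\Phi)\subset W^{1,2}(\Omega,\,\rho,\,\rho\Phi)$ with the same norm, Theorem \ref{th1} says the inclusion $W\to L^2(\Omega,\,\rho)$ is compact. Composing a bounded map with a compact one gives a compact map.

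Third, $T$ is self-adjoint and positive. The form $a$ is symmetric, because $\Phi$ is symmetric and the zero-order term is symmetric. Hence
\[
\langle Tf,g\rangle_\rho=a(Tg,Tf)=a(Tf,Tg)=\langle f,Tg\rangle_\rho .
\]
Similarly $\langle Tf,f\rangle_\rho=a(Tf,Tf)\geq\|Tf\|_W^2$. This is strictly positive when $f\neq 0$, since $Tf=0$ would force $\langle f,v\rangle_\rho=0$ for all $v\in W$, and $W$ is dense in $L^2(\Omega,\rho)$ by the density theorem above, so $f=0$. Thus $T$ is injective with positive eigenvalues.

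The spectral theorem for compact self-adjoint operators on the separable Hilbert space $L^2(\Omega,\,\rho)$ then gives an orthonormal basis $\{e_n\}$ of eigenvectors with $Te_n=\mu_n e_n$. Here $\mu_n>0$ and $\mu_n\to 0$, and the kernel is trivial, so no zero eigenvalue contributes to the basis. Setting $\lambda_n=1/\mu_n$, each $e_n=\lambda_nTe_n$ lies in $W(\Omega,\,\rho,\,\rho\Phi)$ and satisfies
\[
a(e_n,v)=\lambda_n\langle e_n,v\rangle_\rho ,\qquad v\in W(\Omega,\,\rho,\,\rho\Phi).
\]
That is, $e_n$ is a weak solution of $Le_n=\lambda_ne_n$ with the Neumann condition, in the sense of the preceding Remark. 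The $\lambda_n$ are positive, in fact $\lambda_n\geq 1$ since $a(e_n,e_n)\geq\|e_n\|^2_{L^2(\Omega,\rho)}$. They can be ordered increasingly, with finite multiplicities, and tend to $\infty$ because $\mu_n\to0$ in infinite dimension.

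The main obstacle is interpretive: "eigenfunction of $L$" must be read in the weak sense (\ref{weakHelm}), with $f=\lambda u$. That this matches the classical problem for smooth $u$ is exactly the preceding Proposition, via Lemma \ref{Lemmabc}. A secondary check is that $L^2(\Omega,\rho)$ is infinite-dimensional, which holds since it contains all polynomials and $\rho>0$ a.e.
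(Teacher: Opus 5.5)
Your proof is correct and follows the same route as the paper. You use the solution operator $T$ from Proposition~\ref{EXIST}, show it is bounded into $W(\Omega,\,\rho,\,\rho\Phi)$, get compactness on $L^2(\Omega,\,\rho)$ from Theorem~\ref{th1}, get self-adjointness from the symmetry of $a$, and then invert the eigenvalues given by the spectral theorem for compact self-adjoint operators. Your injectivity argument (via density of $W$ in $L^2(\Omega,\,\rho)$) and your bound $\lambda_n\geq 1$ also fill gaps the paper leaves open: it divides by $\mu_n$ without checking $\mu_n\neq 0$, and it only concludes $\nu_n\geq 0$.
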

	\begin{proof}
		Let $T=L^{-1}$ be the linear operator defined from $L^2(\Omega,\,\rho )$ to $W\left(\Omega,\,\rho ,\,\rho \Phi\right)$ by:\\ for $f\in L^2(\Omega,\,\rho )$, $T(f)=u$, where $u$ is the weak solution of (\ref{system}).\\For $f\in L^2(\Omega,\,\rho ),$
		\begin{align*}
			\|T(f)\|_{W\left(\Omega,\,\rho ,\,\rho \Phi\right)}^2&=\int_{\Omega}(\nabla u)^t\rho \Phi\nabla udx_1dx_2+\int_{\Omega}u^2\rho dx_1dx_2\\
			&\leq  \int_{\Omega}(\nabla u)^t\rho \Phi\nabla udx_1dx_2+\int_{\Omega}\left(2+{x_1^2+x_2^2}\right)u^2\rho dx_1dx_2\\
			&=\int_{\Omega}fu\rho dx_1dx_2.
		\end{align*}
		The use of the H\"older inequality yields 
		$\|T(f)\|_{W\left(\Omega,\,\rho ,\,\rho \Phi\right)}^2\leq \|f\|_{L^2(\Omega,\,\rho )}\|u\|_{L^2(\Omega,\,\rho )}$. Using the fact that $T(f)=u$, we obtain $\|T(f)\|_{W\left(\Omega,\,\rho ,\,\rho \Phi\right)}\leq \|f\|_{L^2(\Omega,\,\rho )}$. Therefore, the linear operator $T$ is continuous. Since $\rho $ satisfies the Pearson equation (\ref{te2a}) as well as the Neumann-type condition (\ref{bc}) we derive from the Theorem \ref{th1} that $W\left(\Omega,\,\rho ,\,\rho \Phi\right)$ is compactly embedded into $L^2(\Omega,\,\rho )$. Thus, $T$ is compact from $L^2(\Omega,\,\rho )$ to $L^2(\Omega,\,\rho ).$ Let us prove that $T$  is self-adjoint. Let  $f$ and $g$ be elements of  $L^2(\Omega,\,\rho )$, and $u=T(f),\,v=T(g)$. From (\ref{weakHelm})
		\begin{align*}
			\langle f,Tg\rangle_{L^2\left(\Omega,\,\rho \right)}&=\int_{\Omega}fv\rho dx_1dx_2\\
			&=\int_{\Omega}(\nabla v)^t\rho \Phi\nabla udx_1dx_2+\int_{\Omega}\left(2+{x_1^2+x_2^2}\right)vu\rho dx_1dx_2.
		\end{align*}
		Since $\Phi$ is symmetric, we obtain 
		$$\langle f,Tg\rangle_{L^2\left(\Omega,\,\rho \right)}=\int_{\Omega}(\nabla u)^t\rho \Phi\nabla vdx_1dx_2+\int_{\Omega}\left(2+{x_1^2+x_2^2}\right)vu\rho dx_1dx_2.$$
		Using the fact that $v$ is a weak solution of (\ref{system}) we deduce
		\begin{align*}
			\int_{\Omega}(\nabla u)^t\rho \Phi\nabla vdx_1dx_2+\int_{\Omega}\left(2+{x_1^2+x_2^2}\right)vu\rho dx_1dx_2
			&=
			\int_{\Omega}gu\rho dx_1dx_2\\
			&=\langle Tf,g\rangle_{L^2\left(\Omega,\,\rho \right)}.
		\end{align*} 
		Therefore, $\langle f,T(g)\rangle_{L^2\left(\Omega,\,\rho \right)}=\langle T(f),g\rangle_{L^2\left(\Omega,\,\rho \right)}$. That is $T$ is self-adjoint.\\
		So, $T$ is a compact self-adjoint operator from $L^2\left(\Omega,\,\rho \right)$ to $L^2\left(\Omega,\,\rho \right)$. Thus,  $L^2\left(\Omega,\,\rho \right)$ has an orthogonal basis $\{u_n\}_{n \geq 0}$ formed by eigenfunctions of $T$ (cf. \cite[Theorem II.6.5]{BOYER}). Moreover, the sequence $\{\mu_n\}_{n \geq 0}$ of eigenvalues of $T$, which are real numbers,   tends to 0.  
		Since $T=L^{-1}$, $Tu_n=\mu_n u_n\implies Lu_n=\nu_n u_n,\;\nu_n=\tfrac{1}{\mu_n}$.
		Therefore, $L^2\left(\Omega,\,\rho \right)$  has an orthogonal basis formed by eigenfunctions of $L$ and its eigenvalues  can be ordered in a sequence tending to $+\infty$. Moreover $\nu_n\|u_n\|^2_{L^2\left(\Omega,\,\rho \right)}=\langle Lu_n,u_n\rangle_{L^2\left(\Omega,\,\rho \right)}=a(u_n,\,u_n)$
		with $$a(u_n,\,u_n)=\int_{\Omega}(\nabla u_n)^t\rho \Phi\nabla u_ndx_1dx_2+\int_{\Omega}\left(2+{x_1^2+x_2^2}\right)u_n^2\rho dx_1dx_2.$$
		Since the matrix $\Phi$ is positive-definite, the right-hand side is positive. So, $\nu_n\geq 0.$
	\end{proof}
	
	\subsubsection*{Disclosure statement:The authors report there are no competing interests to declare}
	\subsubsection*{Funding details: No funding was obtained}

\end{document}